\newtheorem{theorem}{Theorem}[section]
\newtheorem{fact}{Fact}
\newtheorem{definition}{Definition}[section]
\newtheorem{lemma}{Lemma}[section]
\newtheorem{proposition}{Proposition}[section]
\newtheorem{corollary}{Corollary}[section]
\theoremstyle{definition}
\newtheorem{example}{Example}[section]
\newtheorem*{remark}{Remark}
\title[Semi-discrete linear Weingarten surfaces with singularities]{Semi-discrete linear Weingarten surfaces with Weierstrass-type representations and their singularities}
\author[M. Yasumoto]{Masashi Yasumoto}
\address[M. Yasumoto]{Osaka City University Advanced Mathematical Institute, 3-3-138 Sugimoto, Sumiyoshi-ku Osaka 558-8585, Japan}
\email[M. Yasumoto]{yasumoto@sci.osaka-cu.ac.jp}
\author[W. Rossman]{Wayne Rossman}
\address[W. Rossman]{Department of mathematics, Faculty of science, Kobe University, Rokkodai-cho 1-1, Nada-ku, Kobe, 657-8501, Japan}
\email[W. Rossman]{wayne@math.kobe-u.ac.jp}
\thanks{The first author was supported by the JSPS Program for Advancing Strategic International Networks to Accelerate the Circulation of Talented Researchers ``Mathematical Science of Symmetry, Topology and Moduli, Evolution of International Research Network based on OCAMI'' (PI: Y. Ohnita). The second author was partly supported by the Grant-in-Aid for Scientific Research (C) 15K04845 (PI: W. Rossman), and (S) 17H06127 (PI: M.-H. Saito). }
\subjclass[2010]{Primary 53A10, Secondary 52C99}
\keywords{discrete differential geometry, Weierstrass-type representation, singularity}
\begin{document}

\begin{abstract}
We establish what semi-discrete linear Weingarten surfaces with Weierstrass-type representations in $3$-dimensional Riemannian and Lorentzian spaceforms are, confirming their required properties regarding curvatures and parallel surfaces, and then classify them. We then define and analyze their singularities. In particular, we discuss singularities of (1) semi-discrete surfaces with non-zero constant Gaussian curvature, (2) parallel surfaces of semi-discrete minimal and maximal surfaces, and (3) semi-discrete constant mean curvature $1$ surfaces in de Sitter 3-space. We include comparisons with different previously known definitions of such singularities.  
\end{abstract}

\maketitle

\section{Introduction}\label{semis-sec1}

Smooth (spacelike) 
linear Weingarten surfaces in 
$3$-dimensional Riemannian or Lorentzian 
spaceforms are those for 
which the Gaussian and mean curvatures $K$ 
and $H$ satisfy an affine linear relation 
\[ \alpha K + 2 \beta H + \gamma = 0 \]
for constants $\alpha$, $\beta$ and 
$\gamma$ not all zero, and generally these 
surfaces will have singularities.  There are 
special cases of these surfaces that admit 
Weierstrass-type representations: 
\begin{enumerate}
\item minimal surfaces in $3$-dimensional 
Euclidean space $\mathbb{R}^3$ 
and their parallel surfaces, 
\item minimal surfaces in $3$-dimensional 
Minkowski space $\mathbb{R}^{2,1}$ 
and their parallel surfaces, 
\item surfaces in $3$-dimensional 
hyperbolic space $\mathbb{H}^3$ such that 
$\alpha = 1 - \beta$ and $\gamma = 
-1-\beta$, referred to here as 
linear Weingarten surfaces of 
Bryant type, or BrLW 
surfaces for short (note that 
flat surfaces occur when $\beta=0$), 
\item surfaces in $3$-dimensional 
de Sitter space $\mathbb{S}^{2,1}$ such 
that $\alpha = -1 - \beta$ and $\gamma = 
1-\beta$, referred to here as 
linear Weingarten surfaces of Bianchi 
type, or BiLW surfaces for short.  
\end{enumerate}

The case of fully discrete surfaces with 
Weierstrass-type representations was 
considered in \cite{RY2}, and the semi-discrete case is 
considered here.  Amongst our results, 
we establish the next 
two facts (see Section \ref{semis-sec4}), 
which are important for 
confirming that our choices for 
Weierstrass-type representations for 
semi-discrete surfaces are correct.  

\begin{fact}\label{semis-fact1}
Semi-discrete surfaces with 
Weierstrass representations 
satisfy the same affine linear relations between the Gaussian 
and mean curvatures as both smooth and fully discrete surfaces 
with Weierstrass representations do.
\end{fact}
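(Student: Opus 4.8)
The plan is to compute the Gaussian and mean curvatures $K$ and $H$ directly from the Weierstrass-type data in each of the four families and to check, case by case, that the resulting triple $(\alpha,\beta,\gamma)$ agrees with the one already known in the smooth and fully discrete settings; the argument will closely parallel the fully discrete one in \cite{RY2}, with finite differences in one of the two parameter directions replaced by derivatives.

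The first step is to fix how $K$ and $H$ are to be read off from a semi-discrete surface $f$ with unit normal $n$. As in the smooth and fully discrete theories, I would use the Steiner-type expansion of the mixed area element of the parallel surfaces $f^t$ obtained by flowing distance $t$ along the normal geodesics of the ambient spaceform: one shows that the mixed area form of $f^t$ equals $(1-2tH+t^2K)\,dA$, so that $K$ and $H$ are exactly the curvature coefficients of this polynomial in $t$, equivalently the determinant and half-trace of the shape operator associated to the first and second fundamental forms of $f$. Establishing this expansion in the semi-discrete category is a short computation: one expands a $2\times2$ determinant whose rows are a finite difference $\Delta(f+tn)$ in the discrete direction and a derivative $\partial(f+tn)$ in the smooth direction, and checks that the $t$-dependence is quadratic with the stated coefficients.

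Each family is then handled by substitution. For semi-discrete minimal surfaces in $\mathbb{R}^3$ and maximal surfaces in $\mathbb{R}^{2,1}$, the Weierstrass representation writes $f$ and $n$ as explicit sums and integrals in a semi-discrete holomorphic function and its edge multiplier; feeding these into the second fundamental form makes the half-trace term vanish, so $H\equiv 0$ and the relation holds with $(\alpha,\beta,\gamma)=(0,1,0)$. Their parallel surfaces at distance $t$ then satisfy the relation obtained by putting $H\equiv 0$ into the Steiner formulas $K^t=K/(1+t^2K)$ and $H^t=-tK/(1+t^2K)$, hence $H^t+tK^t=0$, identical to the smooth and fully discrete cases. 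For the BrLW surfaces in $\mathbb{H}^3$ and the BiLW surfaces in $\mathbb{S}^{2,1}$, I would realize $f$, its parallel surface, and the Weierstrass data inside the hyperquadric model in $\mathbb{R}^{3,1}$, extract $K$ and $H$ from the Steiner expansion along the normal geodesics there, and verify by direct manipulation, using the holomorphicity and isothermicity constraints built into the data, that the coefficients collapse to $(\alpha,\beta,\gamma)=(1-\beta,\beta,-1-\beta)$ and $(-1-\beta,\beta,1-\beta)$ respectively, where $\beta$ is the parameter already present in the representation.

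The conceptual content is small, and I expect the real obstacle to be bookkeeping. The asymmetry between the two parameter directions means that vertex, edge, and mixed quantities appear with several different shift patterns, so the mixed-area and second-fundamental-form expansions are long; one must also check that $n$ is normalized so that the Steiner coefficients are genuinely $K$ and $H$ rather than rescaled versions, and that the spaceform computations specialize correctly under the standard parallel-surface correspondences between $\mathbb{H}^3$, $\mathbb{S}^{2,1}$ and the flat ambient spaces. Once these points are handled, the four affine linear relations drop out and coincide with those in the smooth and fully discrete categories, which is exactly the assertion of the Fact.
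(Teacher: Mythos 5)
Your proposal is correct in substance and follows the paper's overall strategy: read $K$ and $H$ off the edge-based mixed areas (equivalently the Steiner-type expansion $A(x+tn,x+tn)=(1-2tH+t^{2}K)\,A(x,x)$) and verify the affine relation case by case by substituting the Weierstrass data; your tuples $(1-s,s,-1-s)$ and $(-1-s,s,1-s)$ (with $\beta=s$) are exactly the paper's relations \eqref{semis-eq7}. The differences are organizational but worth noting. First, your working description of the mixed area element as a single determinant built from $\Delta(f+tn)$ and $\partial(f+tn)$ at one vertex is not quite right in the semi-discrete category: one must use the edge-symmetrized element $A(x,y)=\tfrac14\bigl((\partial x+\partial x_1)\wedge\Delta y+(\partial y+\partial y_1)\wedge\Delta x\bigr)$ of Definition \ref{semis-def32}, which averages the smooth derivatives over both endpoints of the edge; with the unsymmetrized determinant the resulting coefficients are not the $K$ and $H$ of the paper. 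Second, instead of expanding the mixed areas afresh in every case, the paper funnels the computation through the principal curvatures: Proposition \ref{semis-prop31} expresses $K$ and $H$ in terms of $\kappa,\kappa_1,\kappa_{01}$, and Lemmas \ref{semis-lem41} and \ref{semis-lem42} give those principal curvatures in closed form from $g,\tau,\sigma,s,\lambda$; the relation $H_\theta/K_\theta=-\theta$ for parallels of minimal and maximal surfaces then falls out of Corollary \ref{semis-cor41}, and the $\mathbb{S}^{2,1}$ (BiLW) relation is not recomputed but deduced from the $\mathbb{H}^3$ (BrLW) one via the duality $K^x=1/K^n$, $H^x=H^n/K^n$ in Proposition \ref{semis-prop43}. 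Your plan of direct substitution in all four families would also work, but the principal-curvature formulas and the $x\leftrightarrow n$ duality substantially reduce the bookkeeping you anticipate; even with them, the paper leaves the BrLW verification as a ``direct but tedious calculation.''
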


In the smooth case, as mentioned in \cite{KU}, parallel surfaces of BrLW surfaces in $\mathbb{H}^3$, resp. BiLW surfaces in $\mathbb{S}^{2,1}$, are also BrLW surfaces, resp. BiLW surfaces, and these surfaces are classified into three types. Including minimal and maximal surfaces, there are then five types, like as listed in Fact \ref{semis-fact2} below. The same is true of fully discrete surfaces with Weierstrass-type representations, see \cite{RY2}.

In this paper we investigate semi-discrete linear Weingarten surfaces with Weierstrass-type representations. As will be seen later, together with explanations of the terminologies used, semi-discrete linear Weingarten surfaces are classified as in Fact \ref{semis-fact2} below.

\begin{fact}\label{semis-fact2}
Semi-discrete surfaces with Weierstrass-type representations can be classified into the following five types: 
\begin{enumerate}
\item minimal surfaces and their parallel surfaces in $\mathbb{R}^3$,
\item maximal surfaces and their parallel surfaces in $\mathbb{R}^{2,1}$,
\item flat surfaces in $\mathbb{H}^3$ 
and $\mathbb{S}^{2,1}$,
\item linear Weingarten surfaces of hyperbolic type in $\mathbb{H}^3$ 
and $\mathbb{S}^{2,1}$,
\item linear Weingarten surfaces of de Sitter type in $\mathbb{H}^3$ 
and $\mathbb{S}^{2,1}$.
\end{enumerate}
Parallel surfaces of each type belong again to the same type. 
\end{fact}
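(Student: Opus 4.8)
The plan is to reduce everything to tracking how the affine curvature relation of Fact \ref{semis-fact1} transforms under the semi-discrete parallel construction, mirroring the smooth treatment in \cite{KU} and the fully discrete one in \cite{RY2}. By Fact \ref{semis-fact1}, a semi-discrete surface with a Weierstrass-type representation has a well-defined relation $\alpha K + 2\beta H + \gamma \equiv 0$, hence a point $(\alpha:\beta:\gamma) \in \mathbb{RP}^2$; and from the explicit Weierstrass-type data set up in Section \ref{semis-sec4}, the triples that actually occur are precisely those of items (1)--(4) of the introduction. Normalizing $\mathbb{H}^3$ to have curvature $-1$ and $\mathbb{S}^{2,1}$ to have curvature $+1$, these are: the ``Euclidean'' pencil $\{\gamma = 0\}$ (minimal surfaces in $\mathbb{R}^3$ and their parallels), the analogous ``Minkowski'' pencil (maximal surfaces in $\mathbb{R}^{2,1}$ and their parallels), and --- since $\alpha = 1-\beta,\ \gamma = -1-\beta$ (Bryant type) and $\alpha = -1-\beta,\ \gamma = 1-\beta$ (Bianchi type) both satisfy $\alpha + 2\beta + \gamma = 0$ --- the single projective line $L = \{\alpha + 2\beta + \gamma = 0\}$ for the two remaining cases. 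So it remains to describe the parallel-surface orbits inside each locus.

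The main step is to set up the semi-discrete parallel construction and compute its effect on $(\alpha:\beta:\gamma)$. For a semi-discrete surface $f$ with unit normal field $\nu$, I would define the parallel surfaces $f^{(t)}$ by offsetting along $\nu$ (normal offsetting in $\mathbb{R}^3$ and $\mathbb{R}^{2,1}$, geodesic offsetting in $\mathbb{H}^3$ and $\mathbb{S}^{2,1}$, the latter possibly crossing between the two spaceforms through the light cone at infinity), and then --- using the semi-discrete Steiner-type formula for $K$ and $H$ already invoked in the proof of Fact \ref{semis-fact1} --- verify that $f^{(t)}$ again admits Weierstrass-type data, with new triple obtained from the old one by a one-parameter group $P_t \subset \mathrm{PGL}(3,\mathbb{R})$. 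Homogeneity of the Steiner formula is what makes this induced action projective-linear rather than merely rational, so its orbits are well defined. Then I would check that $\{P_t\}$ stabilizes the Euclidean pencil, the Minkowski pencil, and the line $L$, and that on $L$ its fixed locus contains the flat relation $K \equiv 1$ (indeed the parallel surfaces of a flat surface are again flat, so $K\equiv 1$ is $P_t$-fixed). Consequently each locus decomposes into: one orbit for $\mathbb{R}^3$, one orbit for $\mathbb{R}^{2,1}$, and, on $L$, the flat fixed point together with the two complementary arcs, distinguished by a sign invariant of the Weingarten relation as in \cite{KU}. These are exactly the five types of Fact \ref{semis-fact2}, the two arcs being the hyperbolic-type and de Sitter-type surfaces; the ``identification'' of the $\mathbb{H}^3$ and $\mathbb{S}^{2,1}$ representatives of a single type is precisely the statement that the $P_t$-orbit of such a surface visits both spaceforms.

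Granting this, the last sentence of Fact \ref{semis-fact2} is immediate, since ``type'' has by construction been identified with the $P_t$-orbit of $(\alpha:\beta:\gamma)$; the only thing to add is a check that the possible degenerations of a parallel surface (collapsing to a point or to a lower-dimensional object at certain offset values) still lie in the closure of the same type.

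The principal obstacle is the middle step: proving, in the semi-discrete category, that the offset of a Weierstrass-type surface is again of Weierstrass type and identifying the induced transformation of the Weingarten relation. This requires the correct semi-discrete notions of $\nu$, $K$ and $H$ (via mixed areas along the discrete parameter direction) together with a somewhat delicate verification that the ``discrete holomorphic Gauss map'' of $f^{(t)}$ is the expected M\"obius transform of that of $f$. Everything downstream --- counting the orbits, the sign trichotomy on $L$, and the $\mathbb{H}^3 \leftrightarrow \mathbb{S}^{2,1}$ matching --- parallels the smooth and fully discrete arguments, and I would only indicate the modifications.
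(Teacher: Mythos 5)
Your reduction of Fact \ref{semis-fact2} to the action of the parallel family on the coefficient triple $(\alpha:\beta:\gamma)$ has the right overall shape, but as written it contains a genuine gap: the ``middle step'' you yourself flag as the principal obstacle is not a detail to be indicated later --- it is essentially the entire content of the statement in the semi-discrete setting, and your proposal never carries it out. Concretely, what must be proved is (i) a semi-discrete Steiner-type formula, i.e.\ the expansion of the mixed areas $A(x_\theta,x_\theta)$, $A(x_\theta,n_\theta)$, $A(n_\theta,n_\theta)$ in terms of $A(x,x)$, $A(x,n)$, $A(n,n)$, which yields the explicit transformation of $(K,H)$ under offsetting (this is the content of Corollary \ref{semis-cor41} for $\mathbb{R}^3$, $\mathbb{R}^{2,1}$ and of the first part of the proof of Proposition \ref{semis-prop44} for $\mathbb{H}^3$, $\mathbb{S}^{2,1}$); (ii) the induced transformation law of the Weingarten relation, which in the Bryant/Bianchi cases is the statement that \eqref{semis-eq7} becomes \eqref{semis-eq8}--\eqref{semis-eq9} with $s_\theta=e^{-2\theta}s$, so that the sign of $s$ (hyperbolic $s>0$, de Sitter $s<0$, flat $s=0$) is preserved; and (iii) the verification that the offset surface again admits a Weierstrass-type representation, which in the paper is an explicit gauge computation producing $\tilde E=E\,\mathrm{diag}(e^{-\theta/2},e^{\theta/2})$, $\tilde g=e^{\theta}g$, $\tilde s=e^{-2\theta}s$ solving the same system \eqref{semis-eq3}. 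Without (i)--(iii), the claimed one-parameter projective action $P_t$, its stabilization of the three loci, and the orbit count remain assertions; in particular the ``homogeneity of the Steiner formula'' you invoke to get projective-linearity is precisely the formula you have not derived in the semi-discrete category.

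A secondary point where your route, as stated, would fail: in this construction the parallel family never crosses between the spaceforms. For $x$ in $\mathbb{H}^3$ one has $x_\theta=\cosh\theta\cdot x+\sinh\theta\cdot n\in\mathbb{H}^3$ for every real $\theta$, and $n_\theta=\sinh\theta\cdot x+\cosh\theta\cdot n$ stays in $\mathbb{S}^{2,1}$; so the appearance of both $\mathbb{H}^3$ and $\mathbb{S}^{2,1}$ within a single type is not because an offset orbit ``visits both spaceforms through the light cone.'' It is because one choice of data $(g,s)$ produces the pair $(x,n)$ --- a BrLW surface and its unit normal, a BiLW surface --- whose curvature relations in \eqref{semis-eq7} are exchanged under $K\mapsto 1/K$, $H\mapsto H/K$ (Proposition \ref{semis-prop43}), with the type label given by the sign of the common parameter $s$. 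Repairing your argument therefore requires replacing the light-cone--crossing mechanism by this surface/Gauss-map duality, in addition to supplying the computations listed above.
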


{\bf Singularities on semi-discrete surfaces.} In the smooth case, linear Weingarten surfaces with Weierstrass-type representations as listed in Fact \ref{semis-fact2} above might have singularitites. So it is natural to expect that semi-discrete linear Weingarten surfaces with Weierstrass-type representations also have some notion of ``singularities'' (for the fully discrete case, see \cite{HRSY}, \cite{RY2}, \cite{Yashi2014}). The main purpose in this paper is to clarify and characterize such singularities.

Let us remark on two previous works on singularities of semi-discrete surfaces:

\begin{enumerate}
\item In \cite{Yashi-next}, the first author described semi-discrete maximal surfaces in $\mathbb{R}^{2,1}$ and analyzed their singularities. Singularities of semi-discrete maximal surfaces were defined on the set of edges and so are called {\em singular edges}, and they reflect the property of non-spacelikeness of tangent planes of smooth maximal surfaces at singular points (see \cite{Yashi-next} and Definition \ref{semis-def55} here). 
\item Though singular edges can appear on semi-discrete spacelike surfaces with Weierstrass-type representations in Lorentzian spaceforms, they do not appear on such semi-discrete surfaces in Riemannian spaceforms. In \cite{Yashi-rev}, in order to consider singularities of general semi-discrete surfaces in Riemannian spaceforms as well, points along the smooth curves of the semi-discrete surfaces that could be singular were introduced, which were called {\em flat-parabolic-singular (FPS, for short) points}. FPS points are directly related to behaviors of the principal curvatures of semi-discrete surfaces. Applying this, singularities of particular semi-discrete surfaces were analyzed. 
\end{enumerate}

However, as already mentioned in \cite{Yashi-rev}, those FPS points did not identify certain  possible singularities that we would like to consider. So we need to modify the definition of FPS points of semi-discrete surfaces (see Definition \ref{semis-def51} here).  This enables us to analyze possible singularities that we could not analyze before. 

The semi-discrete case has some uniquely interesting singular behaviors, since it combines elements from both the smooth and fully discrete cases.  In the final section, we establish a definition of singularities on semi-discrete surfaces which takes into account that 
singularities can occur with respect to either the smooth parameter or the discrete parameter for the surface.  Because, like in the fully discrete case, this definition incorporates sign changes in the principal curvatures, we need to also include the possibilities of flat and parabolic points.  Examples of such singularities can be seen in Figure \ref{semis-fig1}.

We thus find ourselves in a situation where we have two independent notions of potential singularities of semi-discrete surfaces, one defined on edges and the other defined at points in the smooth curves of the surfaces. It is natural to look for relations between these two notions, and this is the purpose of Theorems \ref{semis-thm53} and \ref{semis-thm55} here. Specifically, we prove that singular points on semi-discrete maximal surfaces in $\mathbb{R}^{2,1}$ and semi-discrete CMC $1$ surfaces in $\mathbb{S}^{2,1}$ as defined in this paper imply existence of neighboring singular edges (see Theorems  \ref{semis-thm53} and \ref{semis-thm55}). Finally we give criteria for singular edges of semi-discrete CMC $1$ surfaces in $\mathbb{S}^{2,1}$ and prove the analogous result as in Theorem 1.2 in \cite{Yashi-next} for this case as well (see Theorem \ref{semis-thm54}). With these two theorems we see strong correspondence between the two notions of potential singularities, giving us further confidence in the usefulness of these two notions.

\begin{figure}[h] \label{semis-fig1}
\begin{center}
\includegraphics[width=150mm]{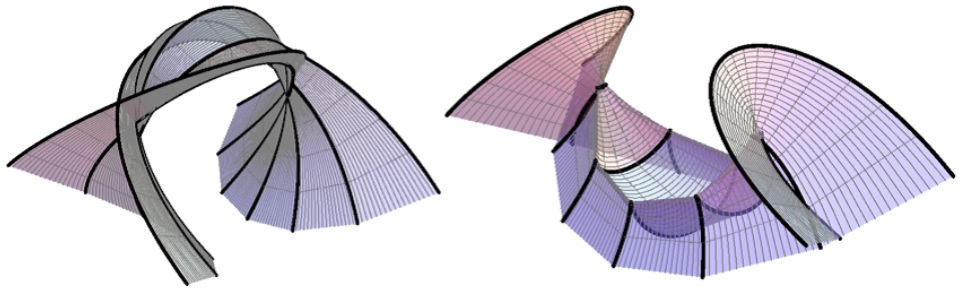}
\end{center}
\caption{Two different parallel surfaces of a 
semi-discrete Enneper minimal surface in 
$\mathbb{R}^3$, with singularities (which have the
appearance of cuspidal edges and swallowtails).}
\end{figure}

Along the way, we give criteria for determining singularities 
on parallel surfaces of semi-discrete minimal and maximal 
surfaces (see Theorem \ref{semis-thm51}), 
as well as on semi-discrete surfaces of 
Bryant and Bianchi types (see Theorem \ref{semis-thm52}).

\section{Semi-discrete Legendre immersions in $3$-dimensional spaceforms}\label{semis-sec2}

Let $M^3$ be a $3$-dimensional Riemannian or Lorentzian spaceform that is a 
quadric in a $4$-dimensional Riemannian or Lorentzian vector space $V^4$.  
A semi-discrete map is a map 
\[ x(k,t) : \mathbb{D} \rightarrow M^3 \; , \]
where $\mathbb{D}$ is a subdomain of $\mathbb{Z} \times \mathbb{R}$.  
We define derivatives and differences of $x$ by
\[
x=x(k,t), \ x_1 = x(k+1,t) , \ \partial x = \frac{dx}{dt} , \ \Delta x = x_1-x . 
\]
We will assume $x$ is a conjugate net, that is, $\partial x$, $\partial x_1$ and $\Delta x$ 
lie in a $2$-plane in $V^4$, called the 
{\em tangent plane} of the surface 
at the edge $[x,x_1]$ with endpoints 
$x$ and $x_1$.  

\begin{definition}
This map $x(k,t)$, together with a unit 
normal map $n(k,t)$, is called a 
{\em semi-discrete Legendre immersion} 
\[ \mathbb{D} \ni (k,t) \rightarrow 
(x,n) \in T_1 M^3 \] if it 
satisfies the following conditions:
\begin{enumerate}
\item $\partial n$, $\partial n_1$ and 
$\Delta n$ all lie in the tangent plane of 
the surface at the edge $[x,x_1]$, 
\item $\Delta x$, $n_1$ and $n$ all lie in one $2$-dimensional plane in $V^4$, 
\item $n$ is perpendicular to $\partial x$.  
\end{enumerate}
\end{definition}

Like for the fully discrete case, the curvature line condition in the discrete 
direction is partially built into condition (2) above, but we would additionally require that 
$\Delta n$ is parallel to $\Delta x$.  The curvature line condition in the 
smooth direction is simply that $\partial x$ and $\partial n$ are parallel, as in the next definition.  Existence of a curvature-line parametrization in the case of smooth surfaces rules out most types of umbilic points, and so in the following definition we are implicitly ruling out any semi-discrete surface with some notion of umbilic point.

\begin{definition}
If $\Delta n || \Delta x$ and 
$\partial x || \partial n$ 
and the tangent cross ratio satisfies
\[
cr(x,x_1):= \partial x \cdot (\Delta x)^{-1} \cdot \partial x_1 \cdot (\Delta x)^{-1} < 0 \; , 
\]
we say that $x$ is {\em curvature-line parametrized}.  
\end{definition}

To define the tangent cross 
ratio $cr(x,x_1)$ above 
requires that we multiply 
and invert points in $M^3$, which can be 
done as follows: we set \[ \mathbb{R}^3 := 
\{ (z_1,z_2,z_3,0) \, | \,  z_j 
\in \mathbb{R} \} \subseteq V = 
\mathbb{R}^4 := 
\{ (z_1,z_2,z_3,z_4) \, | \,  z_j 
\in \mathbb{R} \} \] with standard 
Euclidean metric 
\[
(z_1,z_2,z_3,z_4) \circ (w_1,w_2,w_3,w_4) = 
z_1w_1+z_2w_2+z_3w_3+z_4w_4 
\]
on $V$, and set $\mathbb{H}^3 = 
\mathbb{H}^3_+ \cup 
\mathbb{H}^3_-$, with 
\[ 
\mathbb{H}^3_+ := 
\{
(z_1,z_2,z_3,z_0) \, | \,  z_j 
\in \mathbb{R} , \, z_1^2 + z_2^2 + 
z_3^2 - z_0^2 = -1 , \, z_0 > 0 
\} \; , \]\[ 
\mathbb{H}^3_- := 
\{
(z_1,z_2,z_3,z_0) \, | \,  z_j 
\in \mathbb{R} , \, z_1^2 + z_2^2 + 
z_3^2 - z_0^2 = -1 , \, z_0 < 0 \} ,
\]
and 
\[
\mathbb{S}^{2,1} = \{
(z_1,z_2,z_3,z_0) \, | \,  z_j 
\in \mathbb{R} , \, z_1^2 + z_2^2 + 
z_3^2 - z_0^2 = 1  
\} ,
\]
all lying in 
\[
V=\mathbb{R}^{3,1} = 
\{ 
(z_1,z_2,z_3,z_0) 
\, | \, z_j \in \mathbb{R} \} 
\]
with the Minkowski metric 
\[
(z_1,z_2,z_3,z_0) \circ (w_1,w_2,w_3,w_0) = 
z_1w_1+z_2w_2+z_3w_3-z_0w_0 \; . 
\]
The relevant $4$-dimensional 
spaces are only $\mathbb{R}^4$ and 
$\mathbb{R}^{3,1}$, and we can identify 
points in those two spaces with 2 by 2 
matrices as follows:
\[ 
\mathbb{R}^4 \ni (z_1,z_2,z_3,z_4) 
\mapsto \begin{pmatrix}
z_1+i z_2 & z_3+i z_4 \\
-z_3+i z_4 & z_1-i z_2
\end{pmatrix} , \ \mathbb{R}^{3,1} \ni (z_1,z_2,z_3,z_0) 
\mapsto \begin{pmatrix}
z_0+z_3 & z_1-i z_2 \\
z_1+i z_2 & z_0-z_3
\end{pmatrix} .
\]
We then can regard multiplication and 
inversion of points in $M^3$ as 
multiplication and inversion of matrices. For example, in the case of $\mathbb{R}^{3,1}$,
\begin{eqnarray*}
&&(z_1,z_2,z_3,z_0) \circ (w_1,w_2,w_3,w_0)=\\
&&\frac{1}{2} \mathrm{tr} \left(
\begin{pmatrix} z_0+z_3 & z_1-i z_2 \\ z_1+i z_2 & z_0-z_3 \end{pmatrix} 
\begin{pmatrix} 0 & 1 \\ -1 & 0 \end{pmatrix}
\begin{pmatrix} w_0+w_3 & w_1-i w_2 \\ w_1+i w_2 & w_0-w_3 \end{pmatrix}^t 
\begin{pmatrix} 0 & 1 \\ -1 & 0 \end{pmatrix} \right) \ .
\end{eqnarray*}

Note that the tangent cross ratio being 
real, which means it is a real scalar 
multiple of the 2 by 2 identity matrix 
and then we can regard that scalar 
multiple as the tangent cross ratio 
itself, implies the circularity condition, 
that is, there is a circle through $x$ and $x_1$ which is tangent to $\partial 
x$ at $x$ and $\partial x_1$ at $x_1$.  

We can then define 
semi-discrete isothermic surface as 
follows: 

\begin{definition}
A semi-discrete $x$ in $M^3$ 
is {\em semi-discrete isothermic} if the equation 
\[
cr(x(k,t),x(k+1,t)) = \frac{\tau (t)}{\sigma (k)} < 0 
\]
holds, where $\tau=\tau(t) \in \mathbb{R}$ 
depends only on $t$ and $\sigma=\sigma(k) \in \mathbb{R}$ 
depends only on $k$.  
\end{definition}

\section{Curvatures of curvature-line parametrized semi-discrete surfaces}\label{semis-sec3}

First we define the principal curvatures: 

\begin{definition}\label{semis-def31}
For a semi-discrete Legendre map $(x,n)$, the scalar functions 
$\kappa_k(t)$, $\kappa_{k,k+1}(t)$ given by 
\[
\partial n = - \kappa_k(t) \partial x \; , \;\;\; 
\Delta n = - \kappa_{k,k+1}(t) \Delta x \; , \;\;\; 
\]
are called the {\em principal curvatures} of $x$.  Here we abbreviate 
\[
\kappa = \kappa_k(t) , \quad 
\kappa_1 = \kappa_{k+1}(t)  , \quad  
\kappa_{01} = \kappa_{k,k+1}(t) \quad (\text{also }  \kappa_{-10}=\kappa_{k-1,k}(t)).
\]
\end{definition}

The following Definition \ref{semis-def32} 
in the case of $M^3=\mathbb{R}^3$ was 
given in \cite{KW}, then in 
$M^3=\mathbb{R}^{2,1}$ in 
\cite{Yashi-next}.  The definition 
of $H$ for semi-discrete surfaces in 
general $3$-dimensional spaceforms 
$M^3$ was given in \cite{BHMR}, and 
here we also give the definition of $K$ 
for general $M^3$.  
For this definition we use the mixed area formulation found in \cite{BHMR}.

For two semi-discrete conjugate surfaces $x,y : \mathbb{D} \rightarrow V^4$ 
satisfying parallelity conditions $\partial x \parallel \partial y$ and $\Delta x 
\parallel \Delta y$, we define the mixed area element 
\[
A(x,y) := \frac{1}{4} ((\partial x+\partial x_1) \wedge \Delta y + 
(\partial y+\partial y_1) \wedge \Delta x) 
\; , \] where the operator $\wedge$ 
is defined by 
\[ 
(a \wedge b) c := (a \circ c) b - 
(b \circ c) a . 
\]

\begin{definition}\label{semis-def32}
Let $(x,n) : \mathbb{Z} \times \mathbb{R} 
\to T_1M^3$ be a 
semi-discrete curvature-line parametrized surface.  Then the Gaussian 
curvature $K$ and mean curvature $H$ of $x$ are 
defined, as functions on the set of edges $[x,x_1]$, by 
\[
A(n,n) = K \cdot A(x,x) \; , \;\;\; 
A(x,n) = - H \cdot A(x,x) \; . 
\]
Similarly , the Gaussian and mean curvatures of $n$ can be defined, regarding $x$ as the normal vector of $n$.
\end{definition}

Similarly to the arguments in \cite{BS} and 
\cite{Yashi-rev}, we have the following 
proposition: 

\begin{proposition} \label{semis-prop31}
Let $x$ be a semi-discrete curvature-line parametrized surface with Gauss map 
$n$ so that $(x,n)$ is 
a Legendre immersion.  Let $\kappa$, 
$\kappa_1$, $\kappa_{01}$, $K$, $H$ be 
the resulting principal, 
Gaussian and mean curvatures.  Then 
\[
K = \frac{\kappa_{01} (2 \kappa 
\kappa_1 - \kappa \kappa_{01}- \kappa_1\kappa_{01})}{\kappa_1+\kappa-2 \kappa_{01}} \; , \;\;\; 
H = \frac{\kappa \kappa_1 - \kappa_{01}^2}{\kappa_1+\kappa-2 \kappa_{01}} \; . 
\]
\end{proposition}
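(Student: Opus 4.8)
The plan is to compute both mixed area quantities $A(n,n)$ and $A(x,n)$ directly from the definition, using the principal curvature relations $\partial n = -\kappa\,\partial x$, $\partial n_1 = -\kappa_1\,\partial x_1$, and $\Delta n = -\kappa_{01}\,\Delta x$. First I would record that since $x$ and $n$ satisfy the parallelity conditions (which hold because $(x,n)$ is a Legendre immersion with $\partial x\parallel\partial n$ and $\Delta x\parallel\Delta n$), the mixed area element $A(x,n)$ is well-defined, and similarly $A(n,n)$ and $A(x,x)$. Substituting the principal curvature relations into
\[
A(n,n) = \tfrac14\big((\partial n + \partial n_1)\wedge \Delta n + (\partial n + \partial n_1)\wedge \Delta n\big)
= \tfrac12 (\partial n + \partial n_1)\wedge \Delta n,
\]
and using $\partial n + \partial n_1 = -\kappa\,\partial x - \kappa_1\,\partial x_1$ together with $\Delta n = -\kappa_{01}\,\Delta x$, I expect to get $A(n,n)$ expressed as a scalar multiple of the two basic bivectors $\partial x \wedge \Delta x$ and $\partial x_1 \wedge \Delta x$. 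The same substitution in $A(x,n) = \tfrac14\big((\partial x + \partial x_1)\wedge \Delta n + (\partial n + \partial n_1)\wedge \Delta x\big)$ gives another combination of those same two bivectors, and $A(x,x) = \tfrac12(\partial x + \partial x_1)\wedge \Delta x$ likewise.

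The key step is then to eliminate the two bivectors in favor of $A(x,x)$. Since $\partial x$, $\partial x_1$ and $\Delta x$ all lie in the common $2$-dimensional tangent plane, the bivectors $\partial x \wedge \Delta x$ and $\partial x_1 \wedge \Delta x$ are each proportional to the area bivector of that plane, so their ratio is a scalar determined by how $\partial x$ and $\partial x_1$ sit relative to $\Delta x$ — and this ratio is exactly governed by the tangent cross ratio $cr(x,x_1) = \partial x \cdot (\Delta x)^{-1}\cdot \partial x_1 \cdot (\Delta x)^{-1}$. Concretely, I would introduce coordinates in the tangent plane, write $\partial x = a\,\Delta x + b\,(\Delta x)^{\perp}$ and $\partial x_1 = a_1\,\Delta x + b_1\,(\Delta x)^{\perp}$ for suitable $a,b,a_1,b_1$ (or work directly with the matrix/quaternionic inversion already set up in the paper), so that $\partial x \wedge \Delta x$ and $\partial x_1\wedge \Delta x$ become $b\cdot(\text{unit bivector})|\Delta x|$ and $b_1\cdot(\text{unit bivector})|\Delta x|$ respectively. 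Expressing $cr(x,x_1)$ in these coordinates lets me solve for the combination of $b$ and $b_1$ that appears. After forming the quotients $A(n,n)/A(x,x)$ and $-A(x,n)/A(x,x)$, the coordinates $a,a_1,b,b_1$ should cancel up to the cross-ratio data, and collecting terms should yield exactly the claimed rational expressions in $\kappa$, $\kappa_1$, $\kappa_{01}$.

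The main obstacle I anticipate is the bookkeeping in this elimination step: making sure the cross-ratio relation is used consistently so that the "extra" tangential components $a$, $a_1$ genuinely drop out, and that the denominator $\kappa + \kappa_1 - 2\kappa_{01}$ emerges correctly rather than some other combination. A clean way to manage this is to note that the definition of $A(x,x)$ itself already contains the factor that, when one divides, produces this denominator — i.e. $A(x,x)$ is proportional to $(\partial x + \partial x_1)\wedge\Delta x$, and under the reparametrization-type substitution this picks up $\kappa + \kappa_1 - 2\kappa_{01}$ in the relevant way once $n$-quantities are compared. I would verify the final formulas against the known smooth limit (where $\kappa_{01}\to$ the common value and the expressions reduce to $K = \kappa\kappa_1$, $H = \tfrac12(\kappa+\kappa_1)$) and against the fully discrete case in \cite{BS}, which should serve as a strong consistency check that the elimination has been done correctly.
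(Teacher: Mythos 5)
Your reduction of the problem to mixed areas is the right start: with $\partial n=-\kappa\,\partial x$, $\partial n_1=-\kappa_1\,\partial x_1$, $\Delta n=-\kappa_{01}\,\Delta x$ one indeed gets $A(x,x)=\tfrac12(\partial x+\partial x_1)\wedge\Delta x$, $A(n,n)=\tfrac{\kappa_{01}}{2}(\kappa\,\partial x+\kappa_1\,\partial x_1)\wedge\Delta x$, $A(x,n)=-\tfrac14\bigl((\kappa+\kappa_{01})\,\partial x+(\kappa_1+\kappa_{01})\,\partial x_1\bigr)\wedge\Delta x$, and everything hinges on the single scalar $\mu$ with $\partial x_1\wedge\Delta x=\mu\,\partial x\wedge\Delta x$. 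The gap is in how you propose to find $\mu$: the tangent cross ratio cannot do it. The cross ratio of $x$ involves only $\partial x$, $\partial x_1$, $\Delta x$ and contains no information about the principal curvatures, whereas the stated formulas force $\mu=(\kappa-\kappa_{01})/(\kappa_1-\kappa_{01})$ (substitute back and check); moreover, even as a matter of counting, knowing the product-type quantity $\partial x\cdot(\Delta x)^{-1}\cdot\partial x_1\cdot(\Delta x)^{-1}$ does not determine the ratio of the two wedge products, so the coordinates $a,a_1,b,b_1$ in your scheme will not cancel.

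The missing ingredient is the compatibility of the smooth derivative with the difference applied to the curvature relation in the discrete direction. Differentiating $\Delta n=-\kappa_{01}\,\Delta x$ in $t$ and comparing with $\partial(\Delta n)=\Delta(\partial n)=-\kappa_1\,\partial x_1+\kappa\,\partial x$ gives
\begin{equation*}
(\kappa-\kappa_{01})\,\partial x+(\kappa_{01}-\kappa_1)\,\partial x_1=-\partial\kappa_{01}\,\Delta x ,
\end{equation*}
and wedging with $\Delta x$ kills the right-hand side, yielding exactly $(\kappa-\kappa_{01})\,\partial x\wedge\Delta x=(\kappa_1-\kappa_{01})\,\partial x_1\wedge\Delta x$, i.e. $\mu=(\kappa-\kappa_{01})/(\kappa_1-\kappa_{01})$. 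Inserting this into your mixed-area expressions produces the common factor giving the denominator $\kappa+\kappa_1-2\kappa_{01}$ and the claimed $K$ and $H$; this is the semi-discrete analogue of the Bobenko--Suris argument the paper cites. A small additional caution: your proposed consistency check misreads the roles of the curvatures --- in the smooth limit $\kappa_1\to\kappa$ (same point, smooth direction) and the formulas reduce to $K=\kappa\,\kappa_{01}$, $H=\tfrac12(\kappa+\kappa_{01})$, with $\kappa$ and $\kappa_{01}$ playing the roles of the two principal curvatures, not $\kappa$ and $\kappa_1$.
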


\begin{example}
Like as seen in \cite{RY1}, where semi-discrete catenoids in $\mathbb{R}^3$ 
with smooth profile curves were shown to have the same profile curves 
as smooth catenoids, one can now check here that, more generally, semi-discrete 
linear Weingarten surfaces in spaceforms with smooth profile curves 
have the same profile curves as their smooth counterparts.  
\end{example}

\section{Semi-discrete surfaces with Weierstrass representations}\label{semis-sec4}

\subsection{The cases of $\mathbb{R}^3$ and $\mathbb{R}^{2,1}$}\label{semis-subsec41}
Let $g$ be a semi-discrete holomorphic function, that is, a 
semi-discrete isothermic map into the plane, with tangent cross ratio 
factorizing functions $\tau$, $\sigma$.  We assume the semi-discrete 
analog of a smooth holomorphic function having a nonzero derivative, that is, 
$\partial g$ and $\Delta g$ are never zero, 
and we now state the Weierstrass-type 
representations for semi-discrete 
isothermic minimal 
and maximal surfaces, i.e. those with 
$H=0$ in $\mathbb{R}^3$ and 
$\mathbb{R}^{2,1}$: 

\begin{proposition}[\cite{RY1}, 
\cite{Yashi-next}] Any semi-discrete minimal (resp. 
maximal) surface in $\mathbb{R}^3$ (resp. 
$\mathbb{R}^{2,1}$) can be 
piecewise represented using a semi-discrete holomorphic function $g$ by solving
\begin{equation}\label{semis-eq1}
\partial x =  {\rm Re} \left( \frac{\tau}{2\partial g} \begin{pmatrix} 1-\epsilon g^2 \\ i (1+\epsilon g^2) \\ 2\epsilon g \end{pmatrix} \right) , \quad 
\Delta x = {\rm Re} \left( \frac{\sigma}{2\Delta g} \begin{pmatrix} 1-\epsilon g g_1 \\ i (1+\epsilon g g_1) \\ \epsilon 
(g+g_1) \end{pmatrix} \right) , 
\end{equation}
with $\epsilon = 1$ (resp. $\epsilon = -1$), 
and the normal field is 
\[
n =\frac{1}{1+\epsilon 
|g|^2} \begin{pmatrix} 2 
\epsilon \text{\em Re} (g) 
\\ 2 \epsilon \text{\em Im} (g) \\ 1-
\epsilon |g|^2 \end{pmatrix}. 
\]
\end{proposition}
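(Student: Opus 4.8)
The plan is to recover the holomorphic datum $g$ directly from the surface and then read off the representation formulas, following the semi-discrete analogue of the classical fact that a minimal surface is the Christoffel dual of (a piece of) its Gauss map. Let $(x,n)$ be a semi-discrete curvature-line parametrized surface with $H=0$ in $\mathbb{R}^3$; the maximal case in $\mathbb{R}^{2,1}$ is entirely parallel, with the unit sphere $S^2$ replaced by the unit hyperboloid $\{n\circ n=-1\}$ and the sign $\epsilon=1$ replaced by $\epsilon=-1$ throughout. Since $(x,n)$ is a Legendre immersion, $n$ takes values in $S^2\subset\mathbb{R}^3$ (resp.\ in the hyperboloid in $\mathbb{R}^{2,1}$), so I would let $g$ be the stereographic projection of $n$ from the appropriate pole; the inverse of a suitable such projection is exactly the formula for $n$ in terms of $g$ stated in the proposition. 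It then remains to show that $g$ is semi-discrete holomorphic with factorizing functions $\tau,\sigma$, and that $\partial x$ and $\Delta x$ satisfy \eqref{semis-eq1}.

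First I would verify that $g$ is a semi-discrete isothermic map into the plane. Using the curvature-line relations $\partial n=-\kappa\,\partial x$, $\partial n_1=-\kappa_1\partial x_1$ and $\Delta n=-\kappa_{01}\Delta x$ from Definition \ref{semis-def31}, one computes that the tangent cross ratio of $x$ equals $\frac{\kappa_{01}^2}{\kappa\kappa_1}\,cr(n,n_1)$; the condition $H=0$ combined with Proposition \ref{semis-prop31} forces $\kappa\kappa_1=\kappa_{01}^2$, hence $cr(x,x_1)=cr(n,n_1)$. Because stereographic projection is a M\"obius map it preserves the tangent cross ratio, so $cr(g,g_1)=cr(x,x_1)=\tau(t)/\sigma(k)$ and $g$ is semi-discrete isothermic; moreover $\partial g$ and $\Delta g$ are nonzero precisely where $\partial n$ and $\Delta n$ are, i.e.\ where $\kappa$ and $\kappa_{01}$ are nonzero, which together with the need to use both stereographic charts to cover the sphere is what forces the representation to be only \emph{piecewise}.

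Next I would extract the formulas. Applying $\partial$ and $\Delta$ to the expression for $n$ in terms of $g$ produces $\partial n$ and $\Delta n$ as explicit real-linear expressions in $\partial g$, $\overline{\partial g}$ and in $\Delta g$, $g$, $g_1$. Condition (3) of the Legendre immersion ($n\perp\partial x$) together with the curvature-line condition $\partial x\parallel\partial n$ determines $\partial x$ up to a real scalar multiple of the vector just computed, and condition (2) (coplanarity of $\Delta x$, $n$, $n_1$) together with $\Delta x\parallel\Delta n$ does the same for $\Delta x$; the remaining scalars are then pinned down by requiring the induced tangent cross ratio to equal the prescribed $\tau/\sigma$ and by the condition $H=0$. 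A direct computation identifies these scalars as $\tau/(2\partial g)$ and $\sigma/(2\Delta g)$, yielding \eqref{semis-eq1}. One finally records the compatibility $\Delta(\partial x)=\partial(\Delta x)$, which is automatic here because $x$ was a genuine semi-discrete net --- and which, read in the reverse direction, is the integrability condition ensuring that \eqref{semis-eq1} does define a semi-discrete conjugate net from any semi-discrete holomorphic $g$.

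The step I expect to be the main obstacle is the honest bookkeeping in the middle two paragraphs: checking semi-discretely, not merely by analogy with the smooth case, that conformality of the Gauss map transfers to $g$ with the correct factorizing functions, and at the same time tracking the sign $\epsilon$ that separates the Riemannian target (Gauss image in $S^2$) from the Lorentzian one (Gauss image in the hyperboloid), where the stereographic formula, the induced metric, and hence the normalization of the Weierstrass coefficients all change sign. The square-root ambiguity $\kappa_{01}=\pm\sqrt{\kappa\kappa_1}$ and the branch choices involved should be flagged explicitly, as they are the source of the word ``piecewise.''
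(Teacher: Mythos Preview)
The paper does not supply its own proof of this proposition; it is quoted, with attribution to \cite{RY1} and \cite{Yashi-next}, as an established result, and the text moves straight on to Lemma~\ref{semis-lem41}. Your outline is precisely the argument that underlies those references: recognise the Gauss map $n$ as a semi-discrete isothermic net in the sphere (resp.\ hyperboloid), pull it to the plane by stereographic projection to obtain the semi-discrete holomorphic $g$, and then recover $x$ as the Christoffel dual of $n$. So there is nothing in the paper to compare against beyond the citation, and your approach is the expected one.

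One step, however, does not work as stated. You claim the undetermined real scalars in $\partial x$ and $\Delta x$ are ``pinned down by requiring the induced tangent cross ratio to equal the prescribed $\tau/\sigma$ and by the condition $H=0$.'' These two constraints together yield only $\kappa\kappa_1=\kappa_{01}^2$, a single relation, and any homothety $\lambda x$ is again minimal with the same Gauss map and the same cross ratio; so the cross ratio and $H=0$ cannot by themselves fix the scalars. What actually determines them (up to the common multiplicative freedom in the pair $(\tau,\sigma)$, equivalently up to a global scaling of $x$) is the Christoffel-dual relation
\[
\partial x=\frac{\tau}{\langle \partial n,\partial n\rangle}\,\partial n,\qquad
\Delta x=\frac{\sigma}{\langle \Delta n,\Delta n\rangle}\,\Delta n,
\]
which is the semi-discrete version of the classical statement that a minimal surface is Christoffel-dual to its spherical Gauss image. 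Once you invoke this, substituting the explicit $\partial n$, $\Delta n$ computed from the stereographic formula gives \eqref{semis-eq1} at once, and the compatibility $\Delta(\partial x)=\partial(\Delta x)$ follows from the isothermicity of $g$. With that replacement your sketch is correct; the remaining remarks about $\epsilon$, chart changes and the ``piecewise'' caveat are all apt.
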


Direct computation shows the following lemma.

\begin{lemma}\label{semis-lem41}
For any semi-discrete minimal (resp. 
maximal) surface, the $\kappa$ and 
$\kappa_{01}$ in Definition \ref{semis-def31}
satisfy
\begin{equation}\label{semis-eq2}
\kappa = \frac{-4 |\partial g|^2}{\tau 
(1+\epsilon |g|^2)^2} , \ \kappa_{01} = \frac{-4 |\Delta g|^2}{\sigma 
(1+\epsilon |g|^2)(1+\epsilon |g_1|^2)} . 
\end{equation}
\end{lemma}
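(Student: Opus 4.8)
The plan is to read $\kappa$ and $\kappa_{01}$ off directly from Definition~\ref{semis-def31}, which characterizes them as the unique scalars with $\partial n=-\kappa\,\partial x$ and $\Delta n=-\kappa_{01}\,\Delta x$. So it suffices to substitute the Weierstrass data of the representation above into these two vector identities and to check that they hold with the asserted values; carrying this out simultaneously re-derives the parallelism $\partial n\parallel\partial x$ and $\Delta n\parallel\Delta x$.

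Concretely, write $\rho:=1+\epsilon|g|^{2}$ and $\rho_{1}:=1+\epsilon|g_{1}|^{2}$. Since $\tau,\sigma$ are real and $|\partial g|^{2}/\partial g=\overline{\partial g}$, $|\Delta g|^{2}/\Delta g=\overline{\Delta g}$, the two formulas to be proved are equivalent to
\[
\partial n=\operatorname{Re}\!\left(\frac{2\,\overline{\partial g}}{\rho^{2}}\begin{pmatrix}1-\epsilon g^{2}\\ i(1+\epsilon g^{2})\\ 2\epsilon g\end{pmatrix}\right),\qquad
n_{1}-n=\operatorname{Re}\!\left(\frac{2\,\overline{\Delta g}}{\rho\rho_{1}}\begin{pmatrix}1-\epsilon g g_{1}\\ i(1+\epsilon g g_{1})\\ \epsilon(g+g_{1})\end{pmatrix}\right).
\]
For the first identity I would differentiate the entries of $n$ in $t$, using $\partial(|g|^{2})=2\operatorname{Re}(\bar g\,\partial g)$; the result is a vector whose components are rational in $\operatorname{Re}g$, $\operatorname{Im}g$, $\operatorname{Re}(\partial g)$, $\operatorname{Im}(\partial g)$ with denominator $\rho^{2}$, and comparing it componentwise with the right-hand side reduces to a handful of elementary polynomial identities in those quantities. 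For the second identity one puts $n_{1}-n$ over the common denominator $\rho\rho_{1}$ and compares componentwise in the same way; for instance the third entries of both sides equal $2\epsilon(|g_{1}|^{2}-|g|^{2})/(\rho\rho_{1})$ once one uses $\operatorname{Re}\big((\bar g_{1}-\bar g)(g+g_{1})\big)=|g_{1}|^{2}-|g|^{2}$.

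It is worth recording where the formulas come from, which also gives a shorter (if less robust) argument wherever the induced metric is nondegenerate: since $(x,n)$ is a curvature-line parametrized Legendre immersion one already has $\partial n\parallel\partial x$ and $\Delta n\parallel\Delta x$, so only ratios of norms and a single sign are needed. With $W=(1-\epsilon g^{2},\,i(1+\epsilon g^{2}),\,2\epsilon g)$ one has $W\circ W=0$ and $W\circ\overline{W}=2\rho^{2}$, hence $\partial x\circ\partial x=\tau^{2}\rho^{2}/(4|\partial g|^{2})$, while $\partial n\circ\partial n=4|\partial g|^{2}/\rho^{2}$ because $n$ is the stereographic lift of $g$ (up to orientation) and $4\rho^{-2}|dg|^{2}$ is the pullback of the constant-curvature metric; thus $\kappa^{2}=(\partial n\circ\partial n)/(\partial x\circ\partial x)=16|\partial g|^{4}/(\tau^{2}\rho^{4})$. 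The discrete direction is analogous, using $\widetilde{W}=(1-\epsilon g g_{1},\,i(1+\epsilon g g_{1}),\,\epsilon(g+g_{1}))$ with $\widetilde{W}\circ\widetilde{W}=\epsilon(\Delta g)^{2}$ and $\epsilon|\Delta g|^{2}+\widetilde{W}\circ\overline{\widetilde{W}}=2\rho\rho_{1}$, together with the chordal-distance identity $(n_{1}-n)\circ(n_{1}-n)=4|\Delta g|^{2}/(\rho\rho_{1})$; this gives $\kappa_{01}^{2}=16|\Delta g|^{4}/(\sigma^{2}\rho^{2}\rho_{1}^{2})$. The overall signs $\operatorname{sgn}\kappa=-\operatorname{sgn}\tau$ and $\operatorname{sgn}\kappa_{01}=-\operatorname{sgn}\sigma$ are then fixed by inspecting one component.

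The one place that needs real care is the Lorentzian case $\epsilon=-1$ (maximal surfaces in $\mathbb{R}^{2,1}$): there $\circ$ is indefinite, $\rho=1-|g|^{2}$ may vanish or change sign — exactly at the configurations relevant to singularities, where the norm-ratio argument degenerates — so the purely algebraic componentwise verification of the second paragraph is the one to use, and the bilinear identities $W\circ W=0$, $\widetilde{W}\circ\widetilde{W}=\epsilon(\Delta g)^{2}$, and so on, must be re-checked with the Minkowski signature. One also has to keep the sign convention relating $g$ to the unit normal $n$ consistent across the two families of spaceforms.
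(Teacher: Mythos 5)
The paper gives no argument for this lemma beyond the phrase ``direct computation,'' and your proposal is precisely that computation: substituting the Weierstrass data into the defining relations $\partial n=-\kappa\,\partial x$, $\Delta n=-\kappa_{01}\,\Delta x$ and verifying componentwise, with the auxiliary identities you record ($W\circ W=0$, $W\circ\overline W=2\rho^{2}$, $\widetilde W\circ\widetilde W=\epsilon(\Delta g)^{2}$, the parallelogram law giving $2\rho\rho_{1}$, and the chordal-distance formula) all checking out and yielding the stated magnitudes $\kappa^{2}=16|\partial g|^{4}/(\tau^{2}\rho^{4})$, $\kappa_{01}^{2}=16|\Delta g|^{4}/(\sigma^{2}\rho^{2}\rho_{1}^{2})$. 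Your caution that the overall signs must be fixed from one component and that the case $\epsilon=-1$ (where $\rho$, $\rho\rho_{1}$ may vanish or change sign) requires the purely algebraic verification is exactly the delicate point hidden in the paper's conventions, so the proposal matches the paper's (unstated) proof in both substance and approach.
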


One can also confirm this corollary: 

\begin{corollary}\label{semis-cor41}
For any choice 
of $\theta \in \mathbb{R}$, the 
parallel surface 
\[ 
x_\theta := x + \theta \cdot 
n 
\] 
satisfies the circularity condition, 
with Gaussian and mean curvatures 
\[ 
K_\theta^x 
=\frac{K_0}{1-2\theta \cdot H_0+\theta^2 \cdot K_0} 
 , \quad H_\theta^x 
=\frac{H_0-\theta 
K_0}{1-2\theta \cdot H_0+\theta^2 \cdot K_0} 
\]
satisfying 
\[ \frac{H_\theta^x}{K_\theta^x} = -\theta 
\; . \]
Also, the principal curvatures for 
$x_\theta$ satisfy 
\[ 
\kappa_\theta = \frac{\kappa}{1-\theta \cdot
\kappa}  , \quad 
\kappa_{01.\theta} = \frac{\kappa_{01}}{1-\theta \cdot
\kappa_{01}} \; . 
\]
\end{corollary}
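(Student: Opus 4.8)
The plan is to exploit that one and the same normal field $n$ serves as unit normal for every parallel surface $x_\theta = x+\theta n$, which reduces the whole statement to scalar bookkeeping. First I would compute, using Definition \ref{semis-def31}, that
\[
\partial x_\theta = \partial x + \theta\,\partial n = (1-\theta\kappa)\,\partial x,\qquad
\Delta x_\theta = \Delta x + \theta\,\Delta n = (1-\theta\kappa_{01})\,\Delta x,
\]
and similarly $\partial x_{\theta,1} = (1-\theta\kappa_1)\,\partial x_1$. Hence $\partial x_\theta \parallel \partial x$ and $\Delta x_\theta \parallel \Delta x$, so $x_\theta$ is again a conjugate net with the same tangent plane along each edge $[x_\theta,x_{\theta,1}]$ (which therefore equals the tangent plane of $x$ at $[x,x_1]$), $n$ is still perpendicular to $\partial x_\theta$, and conditions (1)--(3) of the Legendre-immersion definition together with $\partial x_\theta \parallel \partial n$ and $\Delta x_\theta \parallel \Delta n$ hold for $(x_\theta,n)$ verbatim. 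Thus $(x_\theta,n)$ is again a semi-discrete curvature-line parametrized Legendre immersion, and Definitions \ref{semis-def31} and \ref{semis-def32} apply to it.

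Next I would read off the principal curvatures: from $\partial n = -\kappa\,\partial x = -\tfrac{\kappa}{1-\theta\kappa}\,\partial x_\theta$ we get $\kappa_\theta = \kappa/(1-\theta\kappa)$, and likewise $\kappa_{01,\theta} = \kappa_{01}/(1-\theta\kappa_{01})$ and $\kappa_{1,\theta} = \kappa_1/(1-\theta\kappa_1)$. For the circularity condition, the real scalars $1-\theta\kappa$, $1-\theta\kappa_1$, $1-\theta\kappa_{01}$ commute with the matrices appearing in the tangent cross ratio, so
\[
cr(x_\theta,x_{\theta,1}) = \frac{(1-\theta\kappa)(1-\theta\kappa_1)}{(1-\theta\kappa_{01})^2}\,cr(x,x_1),
\]
which is again a real scalar multiple of the identity; by the discussion following the definition of $cr(x,x_1)$ this is precisely the circularity condition.

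For $K_\theta^x$ and $H_\theta^x$ I would use the bilinearity and symmetry of the mixed area element $A(\cdot,\cdot)$, which are immediate from its definition, rather than substituting into Proposition \ref{semis-prop31}. Since $\theta$ is constant and $n$ is the common normal of $x_\theta$, one has $A(x_\theta,x_\theta) = A(x,x) + 2\theta\,A(x,n) + \theta^2\,A(n,n)$ and $A(x_\theta,n) = A(x,n) + \theta\,A(n,n)$; inserting $A(n,n) = K_0\,A(x,x)$ and $A(x,n) = -H_0\,A(x,x)$ from Definition \ref{semis-def32} and dividing yields
\[
K_\theta^x = \frac{A(n,n)}{A(x_\theta,x_\theta)} = \frac{K_0}{1-2\theta H_0+\theta^2 K_0},\qquad
H_\theta^x = -\frac{A(x_\theta,n)}{A(x_\theta,x_\theta)} = \frac{H_0-\theta K_0}{1-2\theta H_0+\theta^2 K_0},
\]
wherever $A(x,x)$ and the denominator do not vanish. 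Then $H_\theta^x/K_\theta^x = H_0/K_0-\theta = -\theta$, using $H_0=0$ for a semi-discrete minimal (resp.\ maximal) surface. As a consistency check one may instead substitute $\kappa_\theta,\kappa_{1,\theta},\kappa_{01,\theta}$ into Proposition \ref{semis-prop31}, which amounts to a routine rational identity in $\kappa,\kappa_1,\kappa_{01},\theta$.

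The computations are all elementary; the only point requiring care is the first step — confirming that $(x_\theta,n)$ is genuinely a semi-discrete Legendre immersion and conjugate net so that the later definitions apply — together with the bookkeeping of signs and of the timelike unit normal in the Lorentzian case $\epsilon=-1$, and the observation that the asserted formulas are meaningful precisely off the loci where $1-\theta\kappa$, $1-\theta\kappa_{01}$ or $1-2\theta H_0+\theta^2 K_0$ vanishes, which are exactly where the parallel surfaces develop singularities.
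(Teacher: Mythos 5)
Your proposal is correct, and it follows essentially the route the paper intends: the paper leaves Corollary \ref{semis-cor41} to the reader (``One can also confirm this corollary''), but its proof of the analogous Proposition \ref{semis-prop44} uses exactly your main steps, namely the relations $\partial x_\theta=(1-\theta\kappa)\partial x$, $\Delta x_\theta=(1-\theta\kappa_{01})\Delta x$ from Definition \ref{semis-def31} together with the bilinear expansion $A(x_\theta,x_\theta)=A(x,x)+2\theta A(x,n)+\theta^2 A(n,n)$ and Definition \ref{semis-def32}. Your additional observations (scalar factors commuting in the tangent cross ratio, $H_0=0$ giving $H_\theta^x/K_\theta^x=-\theta$, and the caveats where $1-\theta\kappa$, $1-\theta\kappa_{01}$ or the denominator vanish) are accurate and complete the argument.
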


\subsection{The cases of $\mathbb{H}^3$ and $\mathbb{S}^{2,1}$}
Taking the same $g$ as in Subsection 
\ref{semis-subsec41}, 
we make the genericity assumption 
\[
\mathcal{T}:=1+s g \overline{g} \neq 0 
\]
for some chosen 
constant $s \in \mathbb{R}$.  Take $\lambda \in 
\mathbb{R}$ to be any non-zero constant so that $1-\lambda 
\sigma \neq 0$.  
Solving, for $E \in \mathrm{GL}_2\mathbb{C}$, 
\begin{equation} \label{semis-eq3}
E^{-1} \Delta E = \begin{pmatrix}
0 & \Delta g \\ \frac{\lambda \sigma}{\Delta g} & 0 
\end{pmatrix} \ , \quad  
E^{-1} \partial E = \begin{pmatrix}
0 & \partial g \\ \frac{\lambda \tau}{\partial g} & 0 
\end{pmatrix} \ , 
\end{equation}
and defining 
\begin{eqnarray} \label{semis-eq4}
L = \begin{pmatrix}
0 & \sqrt{ \mathcal{T} } \\ \frac{-1}{\sqrt{ \mathcal{T} } } & 
\frac{-s \overline{g}}{ \sqrt{ \mathcal{T} } }
\end{pmatrix} \ , 
\end{eqnarray}
and the surface $x$ and its normal $n$ by
\begin{eqnarray} \label{semis-eq5}
 x = \frac{ \mathrm{sgn}  (\mathcal{T}) }{\det E} E L (\overline{E L})^t 
\ , \ 
n = \frac{ \mathrm{sgn}  (\mathcal{T}) }{\det E} E L \begin{pmatrix}
1 & 0 \\ 0 & -1 
\end{pmatrix}
(\overline{E L})^t \ ,
\end{eqnarray}
we will see that these are discrete BrLW surfaces and BiLW surfaces in $\mathbb{H}^3$ and 
$\mathbb{S}^{2,1}$, respectively. First, analogous to the discrete case, we have the following proposition.

\begin{proposition} \label{semis-prop42}
Semi-discrete BrLW surfaces in $\mathbb{H}^3$ and BiLW surfaces in $\mathbb{S}^{2,1}$ with Weierstrass-type representations as in Equations \eqref{semis-eq3}, \eqref{semis-eq4}, \eqref{semis-eq5} are circular nets.
\end{proposition}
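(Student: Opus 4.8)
The plan is to reduce the circularity statement to a statement about the tangent cross ratio being real, in the sense explained in Section~\ref{semis-sec2}: if $cr(x,x_1)$ is a real scalar multiple of the identity matrix, then there is a circle through $x$ and $x_1$ tangent to $\partial x$ at $x$ and to $\partial x_1$ at $x_1$, which is exactly the circularity condition for the pair of neighboring curves. So it suffices to compute $cr(x,x_1) = \partial x \cdot (\Delta x)^{-1} \cdot \partial x_1 \cdot (\Delta x)^{-1}$ from the representation \eqref{semis-eq5} and show it lies in $\mathbb{R} \cdot \mathrm{Id}$. First I would use \eqref{semis-eq5} to express $\Delta x$, $\partial x$ and $\partial x_1$ in terms of $E$, $L$, $\Delta g$, $\partial g$, $\tau$, $\sigma$, $\lambda$ and $\mathcal{T}$. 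The key tool is the pair of linear ODEs/difference equations \eqref{semis-eq3}, which let one rewrite $\partial E$ and $\Delta E$ (hence $\partial x$, $\Delta x$) purely in terms of $E$, $L$ and the scalar data; the off-diagonal Lax-type form of $E^{-1}\partial E$ and $E^{-1}\Delta E$ is what makes the cross ratio collapse to a scalar.

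The key steps, in order, are: (1) differentiate and difference $x = \tfrac{\mathrm{sgn}(\mathcal{T})}{\det E} E L (\overline{EL})^t$, using \eqref{semis-eq3}, \eqref{semis-eq4} and the known $t$- and $k$-dependence of $\mathcal{T}$ (which enters only through $L$), to obtain clean expressions for $\partial x$, $\partial x_1$, $\Delta x$ of the form $\tfrac{1}{\det E}\,E \cdot (\text{explicit }2\times 2) \cdot (\overline{EL})^t$ or similar; (2) compute the matrix inverse $(\Delta x)^{-1}$ — here the factor $\det E$ and the Euclidean/Minkowski matrix model from Section~\ref{semis-sec2} are used, noting that inversion of a point in $M^3$ is ordinary matrix inversion up to the determinant normalization; (3) multiply the four factors $\partial x \cdot (\Delta x)^{-1}\cdot \partial x_1 \cdot (\Delta x)^{-1}$ and watch the $E$, $\overline{E}$ and $\det E$ factors cancel in conjugate pairs, leaving a product of explicit small matrices built from $L$, $L_1$, $\Delta g$, $\partial g$, $\partial g_1$, $\tau$, $\sigma$, $\lambda$; (4) verify this remaining product is a real scalar multiple of the identity, which simultaneously establishes circularity and gives a formula for $cr(x,x_1)$. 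Along the way I would record the analogous statement for $n$, since the computation for $n$ differs from that for $x$ only by insertion of $\mathrm{diag}(1,-1)$ and goes through identically; this is what is needed for $(x,n)$ to be a Legendre immersion in the discrete normal direction.

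The main obstacle I expect is bookkeeping in step~(1)–(3): the matrix $L$ in \eqref{semis-eq4} depends on $t$ through $\mathcal{T} = 1 + s g\overline{g}$ and on $k$ through $\mathcal{T}_1 = 1 + s g_1\overline{g_1}$, so $\partial x$ picks up a $\partial L$ term and $\Delta x$ involves both $L$ and $L_1$; keeping these contributions organized so that the $E$-factors still cancel cleanly — rather than leaving residual conjugation by $E$ that would obstruct reality — is the delicate point. A secondary subtlety is the sign factor $\mathrm{sgn}(\mathcal{T})$ and the possibility that $\mathcal{T}$ changes sign (the genericity assumption $\mathcal{T}\neq 0$ rules out the zero locus but not sign changes between neighboring values), so I would either work locally where $\mathrm{sgn}(\mathcal{T})$ is constant, or track the sign factors explicitly and observe they enter $cr(x,x_1)$ only through $\mathrm{sgn}(\mathcal{T})\mathrm{sgn}(\mathcal{T}_1)/(\mathrm{sgn}(\mathcal{T})\mathrm{sgn}(\mathcal{T}_1))^{\,?}$ in a way that does not affect reality. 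Once the cancellations in step~(3) are carried out, step~(4) should be a short direct check, and the circularity conclusion follows immediately from the discussion of the tangent cross ratio in Section~\ref{semis-sec2}.
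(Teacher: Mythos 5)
Your proposal is correct and follows essentially the same route as the paper: compute $\partial x$, $\partial x_1$, $\Delta x$ from Equations \eqref{semis-eq3}--\eqref{semis-eq5}, use the isothermicity condition $cr(g,g_1)=\tau/\sigma$ to show the tangent cross ratio $\partial x\cdot(\Delta x)^{-1}\cdot\partial x_1\cdot(\Delta x)^{-1}$ is a real scalar multiple of the identity, and conclude circularity. The only minor difference is that for the normal $n$ the paper does not redo the computation with $\mathrm{diag}(1,-1)$ inserted, but instead uses the parallelity relations $\partial n=-\kappa\,\partial x$, $\Delta n=-\kappa_{01}\Delta x$ of Lemma \ref{semis-lem42} to get $cr(n,n_1)=\tfrac{\kappa\kappa_1}{\kappa_{01}^2}\,cr(x,x_1)$, which is real; both routes work.
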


\begin{proof}
Let $x$ be a BrLW surface in $\mathbb{H}^3$ described by a semi-discrete holomorphic function $g$. Observing that $\det E$ does not depend on the smooth parameter $t$, we have
\begin{eqnarray*}
&&\partial x =c_1 E \begin{pmatrix} 0 & \partial g (1+s |g|^2) \\ \partial \bar{g} (1+s |g|^2) & -s(\bar{g} \partial g+g \partial \bar{g}) \end{pmatrix} \overline{E}^t \ , \\
&&\partial x_1=c_2 E 
\begin{pmatrix}
1 & \Delta g \\
\frac{\lambda \sigma}{\Delta g} & 1
\end{pmatrix}
\begin{pmatrix} 
0 & \partial g_1 (1+s |g_1|^2) \\ \partial \bar{g_1} (1+s |g_1|^2) & -s(\bar{g_1} \partial g_1+g_1 \partial \bar{g_1}) 
\end{pmatrix}
\begin{pmatrix}
1 & \frac{\lambda \sigma}{\overline{\Delta g}} \\
\overline{\Delta g} & 1
\end{pmatrix}
\overline{E}^t , \\
&&\Delta x = c_3 E
\begin{pmatrix}
|\Delta g|^2 (1+s|g|^2) & \Delta g (1+s |g|^2) \\ 
\overline{ \Delta g }(1+s |g|^2) & \lambda \sigma (1+s |g_1|^2)-s(|g_1|^2 -|g|^2)
\end{pmatrix}
\overline{E}^t ,
\end{eqnarray*}
where
\begin{eqnarray*}
&&c_1 :=\frac{(1-s)| \partial g |^2+\lambda \tau (1+s |g|^2)^2 }{ | \partial g |^2 (1+s |g|^2)^2 \det E } , \quad c_2:=\frac{(1-s)| \partial g_1 |^2+\lambda \tau (1+s |g_1|^2)^2 }{ | \partial g_1 |^2 (1+s |g_1|^2)^2 (1-\lambda \sigma) \det E } , \\
&&c_3:=\frac{(1-s)|\Delta g|^2+\lambda \sigma (1+s|g|^2)(1+s|g_1|^2)}{ |\Delta g|^2 (1+s|g|^2)(1+s|g_1|^2) (1-\lambda \sigma )\det E } \ .
\end{eqnarray*}
By an isometry of $\mathbb{R}^{3,1}$, without loss of generality, we can assume that $E=
\begin{pmatrix}
1 & 0 \\ 0 & 1
\end{pmatrix}$ at one point.
Using the tangent cross ratio condition $\displaystyle cr(g,g_1)=\frac{\tau}{\sigma}$, by a calculation, we have
\[
\partial x \cdot (\Delta x)^{-1} \cdot \partial x_1 \cdot (\Delta x)^{-1} =cr (x,x_1) \begin{pmatrix}
1 & 0 \\ 0 & 1
\end{pmatrix}
\]
with
\[
cr (x,x_1)=\frac{\sigma (1-\lambda \sigma)}{\tau} \cdot \frac{ \{ (1-s)|\partial g|^2+\lambda \tau (1+s|g|^2)^2 \} \{ (1-s)|\partial g_1|^2+\lambda \tau (1+s|g_1|^2)^2 \} }{ \{ (1-s)|\Delta g|^2+\lambda \sigma (1+s|g|^2) (1+s|g_1|^2) \}^2 } \ .
\]
Thus $x$ is a circular net. Note that $x$ is not generically semi-discrete isothermic. 

A proof that $n$ is a semi-discrete circular net will be given just after Lemma \ref{semis-lem42}.
\end{proof}

Direct computations confirm this lemma:

\begin{lemma}\label{semis-lem42}
For any allowed choice of $s$, we have the following: 
\begin{itemize}
\item $\partial x \parallel \partial n$, $\Delta x \parallel \Delta n$ in 
$\mathbb{R}^{3,1}$, and the principal curvatures in Definition \ref{semis-def31} 
satisfy  
\begin{equation}\label{semis-eq6}
\kappa = \frac{|\partial g|^2 (-1-s)+
(1+s|g|^2)^2 \lambda \tau}{|\partial g|^2 (1-s)+ 
(1+s|g|^2)^2 \lambda \tau} \; , \ 
\kappa_{01} = \frac{|\Delta g|^2 (-1-s)+ 
(1+s|g|^2) (1+s|g_1|^2) \lambda \sigma}{|\Delta g|^2 (1-s)+ 
(1+s|g|^2) (1+s|g_1|^2) \lambda \sigma} \; . 
\end{equation}
%
\item $1+s |g|^2 > 0$, resp. $1+s |g|^2 < 0$, if and only if $x$ lies in $\mathbb{H}^3_+$, 
resp. $\mathbb{H}^3_-$.  
\item $\Delta x$, $\partial x$, $\partial x_1$ 
lie in a plane (that is generically 
spacelike) in $\mathbb{R}^{3,1}$, 
and thus $x$ satisfies the circularity condition.    
\end{itemize}
\end{lemma}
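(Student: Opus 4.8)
The plan is to verify all three bullet points by direct computation from the explicit formulas \eqref{semis-eq5} for $x$ and $n$, exploiting the fact that $n$ differs from $x$ only by the insertion of $\mathrm{diag}(1,-1)$ between $EL$ and $(\overline{EL})^t$. First I would record $\partial x$, $\partial x_1$, $\Delta x$ in the factored forms already obtained in the proof of Proposition \ref{semis-prop42}, and then compute the analogous quantities $\partial n$, $\partial n_1$, $\Delta n$. Since conjugating the middle matrix of each of those three expressions by $\mathrm{diag}(1,-1)$ has a controlled effect (it flips the signs of the off-diagonal entries), one sees that $\partial n$ is a scalar multiple of $\partial x$ and $\Delta n$ is a scalar multiple of $\Delta x$; reading off those scalars gives candidates for $-\kappa$ and $-\kappa_{01}$ via Definition \ref{semis-def31}. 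Matching the resulting rational expressions in $\partial g$, $\Delta g$, $|g|^2$, $|g_1|^2$, $\lambda$, $\tau$, $\sigma$, $s$ against \eqref{semis-eq6} is then a bookkeeping exercise, where the main simplification to watch for is the cancellation of the common factors $c_1,c_3$ (and $\det E$) appearing in both numerator and denominator.

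For the second bullet, I would use that $x = \tfrac{\mathrm{sgn}(\mathcal{T})}{\det E} E L (\overline{EL})^t$ must land in $\mathbb{H}^3$, i.e. satisfy $\det x = 1$ in the matrix model, and then locate its time component $z_0 = \tfrac12 \mathrm{tr}(x \cdot \mathrm{diag}(1,1))$ in the parametrization $\mathbb{R}^{3,1} \ni (z_1,z_2,z_3,z_0) \mapsto \begin{pmatrix} z_0 + z_3 & z_1 - iz_2 \\ z_1 + iz_2 & z_0 - z_3 \end{pmatrix}$. A short computation, using the explicit $L$ from \eqref{semis-eq4} and the normalization $E = I$ at a point together with the fact that $\det E$ is $t$-independent, should show $z_0$ has the same sign as $\mathcal{T} = 1 + s|g|^2$, after accounting for the prefactor $\mathrm{sgn}(\mathcal{T})$; the $\mathbb{H}^3_+$ versus $\mathbb{H}^3_-$ dichotomy then follows from the definitions of $\mathbb{H}^3_\pm$.

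The third bullet is essentially already contained in the proof of Proposition \ref{semis-prop42}: the three expressions for $\partial x$, $\partial x_1$, $\Delta x$ all have the form $E(\,\cdot\,)\overline{E}^t$ with the middle factors lying in a common $2$-dimensional real subspace of Hermitian-type matrices, and conjugation by $E$ preserves linear dependence; that the resulting plane is generically spacelike follows from the computed tangent cross ratio $cr(x,x_1) < 0$ (which is the circularity condition in the curvature-line setting), together with the sign of the metric induced on that plane. Finally, the claim that $n$ is a semi-discrete circular net — promised in Proposition \ref{semis-prop42} — follows by the same argument applied to $n$, since replacing $L$ by $L\,\mathrm{diag}(1,-1)$ does not disturb the structure of the computation. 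The main obstacle I anticipate is purely the volume of algebra in the first bullet: keeping track of the genericity denominators $(1 + s|g|^2)$, $(1 + s|g_1|^2)$, $(1 - \lambda\sigma)$ through the inversions $(\Delta x)^{-1}$ and through the passage from $x$ to $n$, and confirming that the final expressions \eqref{semis-eq6} emerge with exactly the stated signs rather than their reciprocals or negatives.
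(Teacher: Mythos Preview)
Your plan is correct and follows the same route as the paper, which simply states that ``direct computations confirm this lemma'' without providing details; the factored forms from the proof of Proposition~\ref{semis-prop42} are indeed the natural starting point for all three bullets. One small caution: the passage from $x$ to $n$ is an \emph{insertion} of $\mathrm{diag}(1,-1)$ between $L$ and $\overline{L}^t$ rather than a conjugation of the full middle matrix, so its effect on the entries is slightly more than just sign-flipping the off-diagonals---but tracking this carefully through the computation still yields the parallelism and the claimed $\kappa,\kappa_{01}$.
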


Now we show the semi-discrete circularity of semi-discrete BiLW surfaces in $\mathbb{S}^{2,1}$. Let $n$ be a semi-discrete BiLW surface described by a semi-discrete holomorphic function $g$. By Lemma \ref{semis-lem42}, we have 
\begin{eqnarray*}
&&\partial n \cdot (\Delta n)^{-1} \cdot \partial n_1 \cdot (\Delta n)^{-1} = \frac{\kappa \kappa_1}{\kappa_{01}^2}\partial x \cdot (\Delta x)^{-1} \cdot \partial x_1 \cdot (\Delta x)^{-1} \\
&&=\frac{\sigma (1-\lambda \sigma)}{\tau} \cdot \frac{ \{ (-1-s)|\partial g|^2+\lambda \tau (1+s|g|^2)^2 \} \{ (1-s)|\partial g_1|^2+\lambda \tau (1+s|g_1|^2)^2 \} }{ \{ (-1-s)|\Delta g|^2+\lambda \sigma (1+s|g|^2) (1+s|g_1|^2) \}^2 }
\begin{pmatrix}
1 & 0 \\ 0 & 1
\end{pmatrix} .
\end{eqnarray*}
Thus $n$ is also semi-discrete circular, proving the last part of Proposition \ref{semis-prop42}.

Furthermore, combining Proposition \ref{semis-prop42} and Lemma \ref{semis-lem42}, we can show the following curvature properties of semi-discrete BrLW and BiLW surfaces, which also imply Fact \ref{semis-fact1} in the introduction:

\begin{proposition} \label{semis-prop43}
A semi-discrete BrLW surface $x$ in $\mathbb{H}^3$ and a semi-discrete BiLW surface $n$ in $\mathbb{S}^{2,1}$ described by a semi-discrete holomorphic function $g$ via Equations \eqref{semis-eq3}, \eqref{semis-eq4}, \eqref{semis-eq5} satisfy the following curvature conditions:
\begin{equation}\label{semis-eq7}
2s(H^{x}-1)+(1-s)(K^{x}-1)=0, \quad 2s(H^{n}-1)-(1+s)(K^{n}-1)=0 ,
\end{equation}
where $H^{x}$ and $K^{x}$ are the mean and Gaussian curvatures of $x$ and $H^{n}$ and $K^{n}$ are the mean and Gaussian curvatures of $n$.
\end{proposition}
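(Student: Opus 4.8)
The plan is to prove both curvature relations by direct substitution, using Proposition \ref{semis-prop31} to express $K$ and $H$ in terms of the principal curvatures $\kappa$, $\kappa_1$, $\kappa_{01}$, and then feeding in the explicit formulas for $\kappa$ and $\kappa_{01}$ from Lemma \ref{semis-lem42}. The idea is that the particular rational forms
\[
\kappa = \frac{a(-1-s)+\rho}{a(1-s)+\rho}, \qquad \kappa_{01} = \frac{b(-1-s)+\mu}{b(1-s)+\mu},
\]
with $a = |\partial g|^2$, $\rho = (1+s|g|^2)^2\lambda\tau$, $b = |\Delta g|^2$, $\mu = (1+s|g|^2)(1+s|g_1|^2)\lambda\sigma$, are engineered precisely so that the combination $2s(H^x-1)+(1-s)(K^x-1)$ collapses to zero. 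So first I would record the key elementary identities for such fractions: writing $\kappa = (p - (1+s)a)/(p - (1-s)a)$ with $p = a(1-s)+\rho$ is not quite the cleanest; rather, note that $\kappa - 1 = \frac{-2a}{a(1-s)+\rho}$ and $1 - s\cdot\frac{\text{something}}{\cdots}$ — the practical move is to clear denominators once and for all.

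Concretely, the second step is to introduce $D_1 := |\partial g|^2(1-s) + (1+s|g|^2)^2\lambda\tau$ and $D_{01} := |\Delta g|^2(1-s) + (1+s|g|^2)(1+s|g_1|^2)\lambda\sigma$, and similarly $D_1'$ for the $g_1$ (i.e.\ $k+1$) version, so that $1 - \kappa = 2|\partial g|^2/D_1$ and $1 - \kappa_{01} = 2|\Delta g|^2/D_{01}$, etc. Then $H^x$ and $K^x$ from Proposition \ref{semis-prop31} become rational functions in the $D$'s and in $|\partial g|^2$, $|\partial g_1|^2$, $|\Delta g|^2$; after clearing the common denominator $\kappa_1 + \kappa - 2\kappa_{01}$ the claimed relation $2s(H^x-1) + (1-s)(K^x-1) = 0$ is equivalent to a polynomial identity that I would verify by expansion — ideally organizing it so that the vanishing is visible rather than a brute collision of terms. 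The cross-ratio identity $cr(g,g_1) = \tau/\sigma$ (equivalently $|\partial g|^2|\partial g_1|^2/|\Delta g|^4 = \tau^2/\sigma^2$, up to the appropriate sign/normalization already used in the proof of Proposition \ref{semis-prop42}) will be needed to reconcile the $\tau$- and $\sigma$-dependence, since $K$ and $H$ mix principal curvatures in the $t$-direction at two neighboring values of $k$.

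For the BiLW statement, the observation to exploit is the one already made just before Proposition \ref{semis-prop43}: the Gauss map $n$ of a BrLW surface in $\mathbb{H}^3$ is itself a surface in $\mathbb{S}^{2,1}$, and regarding $x$ as the normal of $n$ (as in Definition \ref{semis-def32}) interchanges the roles of the two curvatures via $\kappa \mapsto 1/\kappa$ (more precisely $\kappa^n = 1/\kappa$, $\kappa_{01}^n = 1/\kappa_{01}$, since $\partial x = -(1/\kappa)\partial n$). Under $\kappa \mapsto 1/\kappa$ one checks from \eqref{semis-eq6} that the parameter $s$ for $x$ corresponds to $-s$ for $n$ in the relevant formulas — this is exactly why the sign of the $(1+s)$ term flips between the two equations of \eqref{semis-eq7}. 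So the second relation should follow from the first by this duality $x \leftrightarrow n$, $s \leftrightarrow -s$, together with the standard transformation rule for $K$ and $H$ under passing to the Gauss map; I would state this reduction and then note it also gives the relation directly from a symmetric substitution if one prefers not to invoke the duality.

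The main obstacle I expect is purely bookkeeping: the expressions for $\partial x$, $\partial x_1$, $\Delta x$ in the proof of Proposition \ref{semis-prop42} are already sizable, and although Lemma \ref{semis-lem42} hands us $\kappa$ and $\kappa_{01}$ in closed form, the formula for $H^x$ and $K^x$ in Proposition \ref{semis-prop31} involves $\kappa_1$ as well, which is the $k+1$ analogue of $\kappa$ and therefore carries $|\partial g_1|^2$ and $(1+s|g_1|^2)^2$. Making the final polynomial identity transparent — rather than a miraculous cancellation after pages of expansion — is the real work; the plan is to factor out $(1+s|g|^2)$, $(1+s|g_1|^2)$ and the common denominator early, and then use the cross-ratio relation to eliminate one of $\tau$, $\sigma$, at which point the identity should reduce to something manifestly zero.
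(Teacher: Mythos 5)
Your proposal is correct and takes essentially the same route as the paper: the first relation is there proved by exactly the kind of direct substitution of \eqref{semis-eq6} into Proposition \ref{semis-prop31} that you outline (the paper omits the tedious expansion, and you rightly note that the cross-ratio relation $|\partial g||\partial g_1|/|\Delta g|^2=-\tau/\sigma$ is genuinely needed for the cancellation), while the second relation is obtained, as you propose, from the Gauss-map duality via $K^x=1/K^{n}$, $H^x=H^{n}/K^{n}$. Your side remark that reciprocating the principal curvatures corresponds to replacing $s$ by $-s$ in \eqref{semis-eq6} is not literally accurate (only the final relations \eqref{semis-eq7} are formally exchanged by $s\mapsto-s$), but it is not load-bearing, since the substitution of the $K,H$ transformation rule into the first relation already gives the second.
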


\begin{proof}
The curvature condition for $x$ can be obtained by a direct but tedious calculation, which we omit here. We now see the curvature condition for $n$ from the curvature condition for $x$: The surfaces 
satisfy $\displaystyle K^x=\frac{1}{K^n} , \ H^x=\frac{H^n}{K^n}$. Substituting $K^x, H^x$ into the curvature condition for $x$, we have the curvature condition for $n$, proving the proposition.
\end{proof}

Like in the smooth and fully discrete cases, we define types of semi-discrete BrLW and BiLW surfaces as follows:
\begin{definition}
The surfaces $x$ and $n$ are said to be of {\em hyperbolic type} if $s>0$, and of {\em de Sitter type} if $s<0$.  
\end{definition}

Let $x$ be a semi-discrete BrLW surface in $\mathbb{H}^3$ and let $n$ be a semi-discrete BiLW surface in $\mathbb{S}^{2,1}$ described by a single choice of $g$ and $s$. Then we define the parallel surface $x_{\theta}$ of $x$ at distance $\theta$ $(\theta \in \mathbb{R})$ as 
\[
x_{\theta}:=\cosh \theta \cdot x +\sinh \theta \cdot n \in \mathbb{H}^3.
\]
One can confirm the following proposition, which proves Fact \ref{semis-fact2} in the introduction:

\begin{proposition}\label{semis-prop44}
For any choices of $s$ and $\theta \in \mathbb{R}$, the 
parallel surface $x_\theta$ of a semi-discrete circular surface $x$ in $\mathbb{H}^3$ with unit normal vector field 
\[
n_\theta :=\sinh \theta \cdot x+\cosh \theta \cdot n \in \mathbb{S}^{2,1}
\]
satisfies the circularity condition, and 
\begin{eqnarray*}
K_\theta^x=\frac{K_0^x \cosh^2 \theta -H_0^x \sinh (2 \theta) +\sinh ^2 \theta }{\cosh^2 \theta -H_0^x \sinh (2 \theta) + 
K_0^x \sinh ^2 \theta } \ , \quad 
H_\theta^x = \frac{-(K_0^x+1)\sinh (2\theta)+2H_0^x 
\cosh (2\theta)}{2\{ 
\cosh^2 \theta -H_0^x \sinh (2\theta) + 
K_0^x \sinh^2 \theta \} } \ .
\end{eqnarray*} 
In particular, if $x$ is a semi-discrete BrLW surface in $\mathbb{H}^3$, $x_{\theta}$ is also of Bryant type satisfying
\begin{equation}\label{semis-eq8}
2s_\theta(H_\theta^x-1)+(1-s_\theta)(K_\theta^x-1)=0 \; ,
\end{equation}
where $s_\theta=\mathrm{e}^{-2\theta }s$, and $x_{\theta}$ can be also obtained from the Weierstrass-type representation.

Moreover, the normal $n_\theta$ also satisfies the circularity condition, and is of Bianchi type satisfying 
\begin{equation}\label{semis-eq9}
2s_\theta(H_\theta^x-1)-(1+s_\theta)(K_\theta^x-1)=0 ,
\end{equation}
and $n_{\theta}$ can be also obtained from the Weierstrass-type representation.
\end{proposition}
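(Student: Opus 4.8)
The plan is to verify Proposition \ref{semis-prop44} by reducing everything to the two-variable curvature formulas already available and then exploiting the Lorentz-boost structure of the parallel-surface construction. First I would observe that the map
\[
\begin{pmatrix} x_\theta \\ n_\theta \end{pmatrix} = \begin{pmatrix} \cosh\theta & \sinh\theta \\ \sinh\theta & \cosh\theta \end{pmatrix} \begin{pmatrix} x \\ n \end{pmatrix}
\]
is exactly a hyperbolic rotation in the $(x,n)$-plane of $\mathbb{R}^{3,1}$, so $x_\theta \in \mathbb{H}^3$ and $n_\theta \in \mathbb{S}^{2,1}$ automatically, and $n_\theta$ is perpendicular to $\partial x_\theta$ since perpendicularity is preserved by the boost. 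The parallelity conditions $\partial x_\theta \parallel \partial n_\theta$ and $\Delta x_\theta \parallel \Delta n_\theta$ follow because $\partial x_\theta = \cosh\theta\,\partial x + \sinh\theta\,\partial n$ and $\partial n_\theta = \sinh\theta\,\partial x + \cosh\theta\,\partial n$, and both $\partial x,\partial n$ are proportional (by Lemma \ref{semis-lem42}, or by the Legendre/curvature-line hypothesis); likewise for the $\Delta$ versions. Hence $(x_\theta,n_\theta)$ is again a semi-discrete Legendre immersion, and the principal curvatures transform by the classical Möbius-on-the-line formula: from $\partial n_\theta = -\kappa_\theta\,\partial x_\theta$ and the boost one gets $\kappa_\theta = (\sinh\theta + \kappa\cosh\theta)/(\cosh\theta + \kappa\sinh\theta)$ (valid wherever the denominator is nonzero), and the identical formula for $\kappa_{01,\theta}$ in terms of $\kappa_{01}$, and for $\kappa_{1,\theta}$ in terms of $\kappa_1$.

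Next I would compute $K_\theta^x$ and $H_\theta^x$ by plugging these transformed principal curvatures into the formulas of Proposition \ref{semis-prop31},
\[
K = \frac{\kappa_{01}(2\kappa\kappa_1 - \kappa\kappa_{01} - \kappa_1\kappa_{01})}{\kappa_1 + \kappa - 2\kappa_{01}}\;,\qquad H = \frac{\kappa\kappa_1 - \kappa_{01}^2}{\kappa_1 + \kappa - 2\kappa_{01}}\;,
\]
and simplifying. The three fractions for $\kappa_\theta,\kappa_{1,\theta},\kappa_{01,\theta}$ share the feature that numerator and denominator are linear in $\kappa$ (resp.\ $\kappa_1$, $\kappa_{01}$) with the $2\times2$ coefficient matrix $\left(\begin{smallmatrix}\cosh\theta & \sinh\theta\\ \sinh\theta & \cosh\theta\end{smallmatrix}\right)$; since the fractional-linear map attached to a fixed matrix commutes with the rational expressions for $K$ and $H$ in the expected way, the cleanest route is to clear denominators symbolically and collect. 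A slightly slicker bookkeeping device is to note that $(K,H)\mapsto$ the pair $(K_\theta,H_\theta)$ is the action, on the ``curvature sphere pair'', induced by the same boost, so one expects $K_\theta^x = (K_0^x\cosh^2\theta - H_0^x\sinh 2\theta + \sinh^2\theta)/(\cosh^2\theta - H_0^x\sinh 2\theta + K_0^x\sinh^2\theta)$ and the stated $H_\theta^x$; I would then just confirm this by substituting the $\kappa$-formulas and checking the two identities, which is a finite rational-function check. The circularity of $x_\theta$ (and of $n_\theta$) is immediate: the plane spanned by $\Delta x_\theta,\partial x_\theta,\partial x_{1,\theta}$ is the image under the (linear) boost of the plane spanned by $\Delta x,\partial x,\partial x_1$, and circularity/the tangent-cross-ratio-real condition is preserved by linear conformal maps of the ambient model, so Proposition \ref{semis-prop42} and Lemma \ref{semis-lem42} transfer.

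For the last two assertions, that $x_\theta$ is again of Bryant type with relation \eqref{semis-eq8} and $n_\theta$ is of Bianchi type with \eqref{semis-eq9}, I would substitute the just-derived $K_\theta^x,H_\theta^x$ into $2s_\theta(H-1) + (1-s_\theta)(K-1)$ with $s_\theta = e^{-2\theta}s$ and check it vanishes, using the hypothesis $2s(H_0^x-1) + (1-s)(K_0^x - 1) = 0$ from Proposition \ref{semis-prop43}; the factor $e^{-2\theta}$ should pop out naturally when one writes $\cosh\theta,\sinh\theta$ in exponential form and compares with the $\theta=0$ relation. The Bianchi relation \eqref{semis-eq9} for $n_\theta$ then follows from $K_\theta^x = 1/K_\theta^n$, $H_\theta^x = H_\theta^n/K_\theta^n$ exactly as in the proof of Proposition \ref{semis-prop43}. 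Finally, that $x_\theta$ and $n_\theta$ arise from the Weierstrass-type representation \eqref{semis-eq3}--\eqref{semis-eq5} with the same $g$ but the parameter $s$ replaced by $s_\theta$ (and $\lambda$ suitably rescaled) should be checked by feeding $EL$ into the boost: $E L_{s_\theta}$ differs from $\left(\begin{smallmatrix}\cosh\theta & \sinh\theta\\ \sinh\theta & \cosh\theta\end{smallmatrix}\right)$-conjugation of $EL_s$ only by a scalar and a right multiplication by a constant matrix, which cancels in \eqref{semis-eq5}. The main obstacle is purely computational: verifying the explicit $K_\theta^x,H_\theta^x$ formulas and the Bryant/Bianchi relations is a lengthy rational-function manipulation, and the only conceptual care needed is tracking the loci where denominators ($1 - \theta\kappa$-type expressions, $\kappa_1 + \kappa - 2\kappa_{01}$, $\cosh^2\theta - H_0^x\sinh 2\theta + K_0^x\sinh^2\theta$) vanish, i.e.\ being honest that all identities are meant at edges where the relevant quantities are defined.
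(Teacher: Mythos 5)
Your overall structure for the curvature formulas is workable, and it is a genuinely different route from the paper: the paper computes $K_\theta^x,H_\theta^x$ directly from bilinearity of the mixed area, expanding $A(x_\theta,x_\theta)$, $A(x_\theta,n_\theta)$, $A(n_\theta,n_\theta)$ in terms of $A(x,x)$, $A(x,n)$, $A(n,n)$, and then obtains \eqref{semis-eq8} not by forward substitution but by inverting the parallel transformation ($x=(x_\theta)_{-\theta}$) and plugging the resulting expressions for $K_0^x,H_0^x$ into \eqref{semis-eq7}; your route through transformed principal curvatures and Proposition \ref{semis-prop31} is a legitimate (if heavier) alternative. Two caveats: with the paper's convention $\partial n=-\kappa\,\partial x$ and $x_\theta=\cosh\theta\,x+\sinh\theta\,n$, the correct transformation is $\kappa_\theta=(\kappa\cosh\theta-\sinh\theta)/(\cosh\theta-\kappa\sinh\theta)$, not the all-plus formula you wrote (yours is the $-\theta$ parallel surface, and would reproduce the stated $K_\theta^x,H_\theta^x$ only with $\theta\mapsto-\theta$); and the ``ambient boost'' framing is loose, since the plane $\mathrm{span}\{x,n\}$ in which the boost acts varies from point to point, so there is no fixed isometry of $\mathbb{R}^{3,1}$ at work --- what you actually need (and what is true, via $\partial n=-\kappa\,\partial x$, $\Delta n=-\kappa_{01}\Delta x$) is that $\partial x_\theta$ and $\Delta x_\theta$ are scalar multiples of $\partial x$ and $\Delta x$, so the tangent plane at each edge is unchanged and circularity transfers.

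The genuine gap is the final claim about the Weierstrass-type representation. You assert that $x_\theta$, $n_\theta$ arise from \eqref{semis-eq3}--\eqref{semis-eq5} with the \emph{same} $g$, with $s$ replaced by $s_\theta$ and $\lambda$ rescaled, because $EL_{s_\theta}$ should differ from the boosted $EL_s$ only by a scalar and a constant right factor. This is false as stated: changing $s$ alone replaces $\mathcal{T}=1+s|g|^2$ by $1+e^{-2\theta}s|g|^2$, a \emph{non-constant} change, so $L_{g,s_\theta}$ is not a constant-matrix (or harmless scalar) modification of $L_{g,s}\,\mathrm{diag}(e^{\theta/2},e^{-\theta/2})$, and a non-constant scalar would not cancel in \eqref{semis-eq5}. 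The correct mechanism, which is what the paper proves, is that $x_\theta=\frac{\mathrm{sgn}(\mathcal{T})}{\det E}\,EL\,\mathrm{diag}(e^{\theta},e^{-\theta})\,\overline{(EL)}^t$ is reproduced by the data $\tilde g=e^{\theta}g$ \emph{and} $\tilde s=e^{-2\theta}s$, with the \emph{same} $\lambda,\tau,\sigma$ and new frame $\tilde E=E\,\mathrm{diag}(e^{-\theta/2},e^{\theta/2})$; note $\tilde s\,\tilde g\overline{\tilde g}=s\,g\bar g$, so $\mathcal{T}$ is unchanged, which is exactly why the factorization works. Rescaling $g$ (with the compensating change in $s$) is not a gauge symmetry of the representation but is precisely the parallel deformation, whereas keeping $g$ and moving $\lambda$ changes the frame $E$ in an essentially different way; so the last assertion of the proposition is not established by your argument as written, and needs the explicit computation with $(\tilde g,\tilde s)$.
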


\begin{proof}
Combining Lemma \ref{semis-lem42} and the definition of the tangent cross ratio, we can easily show the circularity conditions for $x_{\theta}$ and $n_{\theta}$. Next we determine the mean and Gaussian curvatures of a parallel surface of a semi-discrete circular surface in $\mathbb{H}^3$. Let $K^x_0, H^x_0$ (resp. $K^x_{\theta}, H^x_{\theta}$) be the Gaussian and mean curvatures of $x$ (resp. $x_\theta$). By a calculation, we have
\begin{eqnarray*}
A(x_\theta , x_\theta)&=&\cosh ^2 \theta \cdot A(x,x)+\sinh (2\theta) \cdot A(x,n)+\sinh ^2 \theta \cdot A(n,n) \\
&=& \{ \cosh ^2 \theta -H^x_0 \sinh (2\theta)+ K^x_0 \sinh ^2 \theta \} \cdot A(x,x) .
\end{eqnarray*}
Similarly, we have
\begin{eqnarray*}
&&A(n_\theta , n_\theta)=\{ \sinh ^2 \theta -H^x_0 \sinh (2\theta)+K^x_0 \cosh ^2 \theta \} \cdot A(x,x), \\
&&A(x_\theta , n_\theta)=\frac{1}{2} \{ (K^x_0+1) \sinh (2\theta) -2H^x_0 \cosh (2 \theta) \} \cdot A(x,x) .
\end{eqnarray*}
Thus we have $K^x_{\theta}, H^x_{\theta}$ of the forms as in Proposition \ref{semis-prop44}.

Here we assume that $(x,n)$ is a pair of a semi-discrete BrLW surface in $\mathbb{H}^3$ and a semi-discrete BiLW surface in $\mathbb{S}^{2,1}$ described by a single choice of $g$ and $s$. Note that $x$ is a parallel surface of $x_\theta$ with distance $-\theta$, that is, $x=(x_\theta)_{-\theta}$. Then we have
\begin{eqnarray*}
K^x_0=\frac{K_\theta^x \cosh^2 \theta +H_\theta^x \sinh (2 \theta) +\sinh ^2 \theta }{\cosh^2 \theta +H_\theta^x \sinh (2 \theta) + 
K_\theta^x \sinh ^2 \theta } \ , \quad 
H_0^x = \frac{(K_\theta^x+1)\sinh (2\theta)+2H_\theta^x 
\cosh (2\theta)}{2\{ 
\cosh^2 \theta +H_\theta^x \sinh (2\theta) + 
K_\theta^x \sinh^2 \theta \} } \ .
\end{eqnarray*}
Substituting these into Equation \eqref{semis-eq7}, we have the relation \eqref{semis-eq8}. By a similar argument as in the proof of Proposition \ref{semis-prop43}, we have Equation \eqref{semis-eq9}.

Finally, we show that any parallel surfaces of semi-discrete BrLW and BiLW surfaces with Weierstrass-type representations can be also described by Weierstrass-type representations. First we consider parallel surfaces of semi-discrete BrLW surfaces in $\mathbb{H}^3$. Let $x$ be a semi-discrete BrLW surface in $\mathbb{H}^3$ and $x_\theta$ be a parallel surface of $x$ with distance $\theta$. Here we assume that $\mathcal{T}>0$ (even when $\mathcal{T}<0$, the conclusion is the same). Then
\begin{eqnarray*}
&&x_\theta =\frac{1}{\det E} EL \begin{pmatrix} e^{\theta} & 0 \\ 0 & e^{-\theta} \end{pmatrix} \overline{(EL)}^t = \frac{1}{\det E} E \begin{pmatrix} e^{-\theta} (1+s g \bar{g}) & -s e^{-\theta} g \\ -s e^{-\theta} \bar{g} & \frac{ e^{\theta}(1+s^2 e^{-2\theta} g \bar{g}) }{1+s g \bar{g} } \end{pmatrix} \overline{E}^t \\
&&=\frac{1}{\det E} \left( E \begin{pmatrix} e^{-\theta/2} & 0 \\ 0 & e^{\theta/2} \end{pmatrix} \right)
\begin{pmatrix} 1+s g \bar{g} & -s e^{-\theta} g \\ -s e^{-\theta} \bar{g} & \frac{ 1+s^2 e^{-2\theta} g \bar{g} }{1+s g \bar{g} } \end{pmatrix} \overline{\left( E \begin{pmatrix} e^{-\theta/2} & 0 \\ 0 & e^{\theta/2} \end{pmatrix} \right)}^t=\frac{1}{\det \tilde{E} } \tilde{E} \tilde{L} \overline{( \tilde{E} \tilde{L} )}^t \ ,
\end{eqnarray*}
where $\tilde{E}:=E \begin{pmatrix} e^{-\theta/2} & 0 \\ 0 & e^{\theta/2} \end{pmatrix}$ and $\tilde{L}:=\begin{pmatrix} 1+\tilde{s} \tilde{g} \overline{ \tilde{g} } & -\tilde{s} \tilde{g} \\ -\tilde{s}\overline{ \tilde{g} } & \frac{ -\tilde{s} \overline{ \tilde{g} } }{ 1+\tilde{s} \tilde{g} \overline{ \tilde{g} } }  \end{pmatrix} \ (\tilde{g}=e^{\theta} g, \tilde{s}=s e^{-2\theta})$. By the definition of $E$, $\tilde{E}$ is a solution of 
\[
\partial \tilde{E}= \tilde{E} \begin{pmatrix} 0 & \partial \tilde{g} \\ \frac{\lambda \tau}{ \partial \tilde{g} } & 0 \end{pmatrix}, \quad \Delta \tilde{E}= \tilde{E} \begin{pmatrix} 0 & \Delta \tilde{g} \\ \frac{\lambda \sigma}{ \Delta \tilde{g} } & 0 \end{pmatrix} .
\]
Thus $x_\theta$ can be obtained via the Weierstrass-type representation by replacing $g$ in Equations \eqref{semis-eq3}, \eqref{semis-eq4}, \eqref{semis-eq5} with $\tilde{g}$ and choosing $\tilde{s}$. Similarly, we consider semi-discrete BiLW surfaces in $\mathbb{S}^{2,1}$. Let $n$ be a semi-discrete BiLW surface in $\mathbb{S}^{2,1}$ and let $n_{\theta}$ be a parallel surface of $n$ at distance $\theta$. Then $n_{\theta}$ can be obtained via the Weierstrass-type representation by replacing $g$ in Equations \eqref{semis-eq3}, \eqref{semis-eq4}, \eqref{semis-eq5} with $\tilde{g}$ and choosing $\tilde{s}$, proving the proposition.
\end{proof}

We have thus arrived at Facts \ref{semis-fact1} and \ref{semis-fact2} in the introduction. 
The smooth and fully discrete cases can be found in \cite{GMM-second}, \cite{KU}, \cite{RY2}.

\section{Singularities of semi-discrete surfaces with Weierstrass representations}\label{semis-sec5}

Influenced by definitions of singularities 
in the smooth and fully discrete cases, we 
make the following definition, refining the 
definition in \cite{Yashi-rev} (recall that 
flat points on smooth surfaces are those for which both principal curvatures are zero, 
and parabolic points are those for which 
exactly one principal curvature is zero):  

\begin{figure}
\begin{center}
\includegraphics[width=150mm]{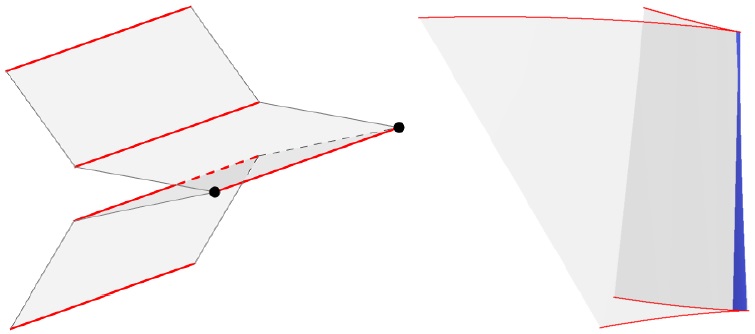}
\end{center}
\caption{Left: A typical example of the first part of item (1) in Definition \ref{semis-def51}. Right: A typical example of the first part of item (2).}
\label{semis-fig2}
\end{figure}

\begin{definition}\label{semis-def51}
We say that a point $(k_0,t_0)$, also its image $x(k_0,t_0)$, is 
a flat (F) or parabolic (P) or singular 
(S) point of the semi-discrete surface 
$x(k,t)$, with respect to either the discrete direction represented 
by changing $k$ (see the left-hand side of Figure \ref{semis-fig2}) or the smooth direction represented by changing $t$ (see the right-hand side of Figure \ref{semis-fig2}), as follows: 

\begin{enumerate}
\item $x(k_0,t_0)$ is an FPS point with respect to the discrete 
direction if 
\begin{eqnarray*} 
\kappa_{k_0-1,k_0}(t_0) \cdot 
\kappa_{k_0,k_0+1}(t_0) < 0, \text{ or at least one of } \kappa_{k_0-1,k_0}(t_0), 
\kappa_{k_0,k_0+1}(t_0) \text{ is 
infinite.} 
\end{eqnarray*}
\item $x(k_0,t_0)$ is an FPS point with respect to the smooth 
direction if 
\begin{eqnarray*}
\kappa_{k_0-1}(t_0) \cdot \kappa_{k_0}(t_0) < 0 \text{ or } 
\kappa_{k_0}(t_0) \cdot \kappa_{k_0+1}(t_0) < 0 
\text{ or } \\ 
\text{at least one of } \kappa_{k_0-1}(t_0), \kappa_{k_0}(t_0), \kappa_{k_0+1}(t_0) 
\text{ is infinite.} 
\end{eqnarray*}
\end{enumerate}
In the latter cases of either (1) or (2) above, where infinite values occur, we can say that $x(k_0,t_0)$ is a singular (S) point.  
\end{definition}

Like in \cite{RY2}, 
we are interested in cases where we can 
differentiate between FP and S vertices.  
This is the purpose of the next two 
definitions, which are independent of whether the surface has a 
Weierstrass representation.  

\begin{definition} \label{semis-def52}
We say that a semi-discrete circular
surface $x$ is {\em embedded} at a 
given edge $[x,x_1]$ if $\partial x$ and 
$\partial x_1$ lie to the same side of the 
line through $\Delta x$ within the tangent 
plane. Embeddedness of the Gauss map $n$ is similarly defined.
\end{definition}

Generically, in the (non-umbilic) smooth case, rank $1$ singularities of a surface correspond to flat or parabolic points of the surface's unit normal field, and vice versa. When $x$ or $n$ is not locally embedded, we certainly have a singular point, and this motivates the next definition.

\begin{definition} \label{semis-def53}
Let $x$ be a semi-discrete circular surface with bounded principal curvatures and spacelike tangent planes on its edges. 
Suppose that $x(k_0,t_0)$ is an FPS point with respect to just the smooth direction, and that precisely one of $\ell_{-10}:= \kappa_{k_0-1}(t_0) \cdot \kappa_{k_0}(t_0)$ and $\ell_{10} := \kappa_{k_0}(t_0) \cdot \kappa_{k_0+1}(t_0)$ is negative and the other is positive. Then 
we call $x(k_0,t_0)$ an FP point (i.e. non-singular), resp. a singular (S) point, if the Gauss map is not embedded, resp. is embedded, on the edge corresponding to the $\ell_{*0}$ that is negative at $t=t_0$ ($*=-1$ or $1$).  
\end{definition}

\begin{remark}
Suppose $\kappa_{k_0-1}(t_0) \cdot \kappa_{k_0}(t_0)$ is negative. 
Then $n$ is embedded on the edge 
$[x_{k_0-1},x_{k_0}]$ at $t=t_0$ if and only if 
$\text{cr}(n_{k_0-1},n_{k_0})<0$, since the 
tangent plane is spacelike.  By definition, 
$\text{cr}(x_{k_0-1},x_{k_0})=
\kappa^2_{k_0-1,1}
\kappa_{k_0-1}^{-1} \kappa_{k_0}^{-1} 
\text{cr}(n_{k_0-1},n_{k_0})$, so $n$ is 
embedded if and only if 
$\text{cr}(x_{k_0-1},x_{k_0})>0$.
\end{remark}

It was pointed out in \cite{RY2} that 
parallel surfaces of minimal and maximal 
surfaces never have flat or parabolic points, 
and this was used in that work 
to justify the analog 
of the definition below for the fully 
discrete surface case.  Similarly, the 
definition just below is also justified in 
the semi-discrete case.  In fact, we 
can see from the formulas for 
$\kappa_\theta$ and $\kappa_{01,\theta}$ 
in Corollary \ref{semis-cor41} that these 
principal curvatures are never zero.  

\begin{definition} \label{semis-def54}
On any parallel surface of a semi-discrete minimal or maximal 
surface, all FPS points are called simply {\em singular (S) points}.
\end{definition}

Lemma \ref{semis-lem42} now provides proofs of the following two theorems.

\begin{theorem}\label{semis-thm51}
For a parallel surface $x_\theta$ 
of a semi-discrete 
minimal or maximal 
surface at oriented distance 
$\theta$ (in the maximal case 
$\epsilon=-1$ we assume $|g_{-1}|$, 
$|g|$, $|g_1|$ 
are all not $1$), the condition for 
$\kappa_{-10} \cdot \kappa_{01}$ to be nonpositive -- that is, $x_{\theta}(k_0,t_0)$ is 
singular with respect to the discrete direction -- is 
\[
\theta \in [\min(a_{-1},a_1),
\max(a_{-1},a_1)] \; , 
\]
where 
\[ a_{*}= \frac{-\sigma (1+
\epsilon |g|^2)(1+\epsilon 
|g_{*}|^2)}{4 |\Delta g_{0*}|^2}
\; , \;\;\; * \in \{ -1,1 \} \; , \]
and the condition for $\kappa \cdot 
\kappa_1$ to be nonpositive -- that is, $x_{\theta}(k_0,t_0)$ is 
singular with respect to the smooth direction (with regard to the edge $[ x_{\theta}(k_0,t_0), x_{\theta}(k_0+1,t_0) ]$) -- is 
\[
\theta \in [\min(b,b_1),\max(b,b_1)] \; , 
\]
where 
\[ 
b= \frac{-\tau (1+ 
\epsilon |g|^2)^2}{4 |\partial g|^2} \; , \;\;\; 
b_1= \frac{-\tau (1+ 
\epsilon |g_1|^2)^2}{4 |\partial g_1|^2} \; .
\]
One can analogously give a condition for $x_{\theta} (k_0,t_0)$ to be singular in the smooth direction with regard to the edge $[ x_{\theta}(k_0-1,t_0), x_{\theta}(k_0,t_0) ]$.
\end{theorem}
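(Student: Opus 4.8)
The plan is to derive the two $\theta$-intervals directly from the explicit expressions for the principal curvatures of the parallel surface, using only Lemma \ref{semis-lem41} and Corollary \ref{semis-cor41}. First I would recall from Corollary \ref{semis-cor41} that for a parallel surface $x_\theta$ of a semi-discrete minimal or maximal surface we have $\kappa_\theta = \kappa/(1-\theta\kappa)$ and $\kappa_{01,\theta} = \kappa_{01}/(1-\theta\kappa_{01})$, and hence $1-\theta\kappa_\theta = 1/(1-\theta\kappa)$, which is useful because it shows these denominators never vanish and keeps track of signs. The key observation is that a product like $\kappa_{-10,\theta}\cdot\kappa_{01,\theta}$ has the same sign behavior as a product of two affine linear functions of $\theta$ whose roots are exactly $1/\kappa_{-10}$ and $1/\kappa_{01}$, once one is careful about the (never-vanishing) denominators $(1-\theta\kappa_{-10})$ and $(1-\theta\kappa_{01})$.

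Concretely, for the discrete direction I would write
\[
\kappa_{-10,\theta}\cdot\kappa_{01,\theta} = \frac{\kappa_{-10}\,\kappa_{01}}{(1-\theta\kappa_{-10})(1-\theta\kappa_{01})}
= \frac{\kappa_{-10}\kappa_{01}}{\kappa_{-10}\kappa_{01}}\cdot\frac{1}{(\theta - \tfrac{1}{\kappa_{-10}})(\theta-\tfrac{1}{\kappa_{01}})}\cdot(\text{sign bookkeeping}),
\]
more carefully: multiply numerator and denominator appropriately so that $\kappa_{-10,\theta}\kappa_{01,\theta} \le 0$ is equivalent to $(\theta\kappa_{-10}-1)(\theta\kappa_{01}-1)\le 0$ together with $\kappa_{-10}\kappa_{01}\ge 0$ — but in fact, since for minimal/maximal surfaces $\kappa_{-10}$ and $\kappa_{01}$ are given by Lemma \ref{semis-lem41} and have a definite sign (both negative when $\epsilon=1$, and of controlled sign when $\epsilon=-1$ under the stated hypothesis that $|g_{-1}|,|g|,|g_1|\ne 1$), the condition reduces to $\theta$ lying between the two roots $1/\kappa_{-10}$ and $1/\kappa_{01}$. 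Substituting the formula $\kappa_{01} = -4|\Delta g|^2/(\sigma(1+\epsilon|g|^2)(1+\epsilon|g_1|^2))$ from Equation \eqref{semis-eq2} gives $1/\kappa_{01} = -\sigma(1+\epsilon|g|^2)(1+\epsilon|g_1|^2)/(4|\Delta g|^2)$, which is exactly $a_1$, and similarly $1/\kappa_{-10} = a_{-1}$ (using $\kappa_{-10} = \kappa_{k_0-1,k_0}$, whose formula involves $\Delta g_{-1,0}$). Thus $\kappa_{-10,\theta}\kappa_{01,\theta}\le 0 \iff \theta\in[\min(a_{-1},a_1),\max(a_{-1},a_1)]$.

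For the smooth direction the argument is formally identical with $\kappa_\theta,\kappa_{1,\theta}$ in place of $\kappa_{-10,\theta},\kappa_{01,\theta}$: one gets $\kappa_\theta\kappa_{1,\theta}\le 0 \iff \theta$ lies between $1/\kappa$ and $1/\kappa_1$, and substituting $\kappa = -4|\partial g|^2/(\tau(1+\epsilon|g|^2)^2)$ from \eqref{semis-eq2} yields $1/\kappa = b$ and likewise $1/\kappa_1 = b_1$. The final sentence of the statement — the condition for the edge $[x_\theta(k_0-1,t_0),x_\theta(k_0,t_0)]$ — follows by replacing the pair $(g,g_1)$ by $(g_{-1},g)$ throughout, giving the analogous interval with endpoints $b_{-1}$ and $b$.

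The main obstacle — and the only place one must be genuinely careful rather than mechanical — is the sign bookkeeping in the $\epsilon=-1$ (maximal) case: there $1+\epsilon|g|^2 = 1-|g|^2$ can be negative, so $\kappa$ and $\kappa_{01}$ need not have the same sign as in the minimal case, and one must check that the hypothesis $|g_{-1}|,|g|,|g_1|\ne 1$ guarantees these quantities are finite and nonzero so that the roots $a_*$, $b$, $b_1$ are well-defined and the "between the roots" characterization is still correct (one should verify that when the two factors $\kappa_{-10},\kappa_{01}$ have opposite signs the reduction still produces a closed interval with the stated endpoints, or else argue that the Legendre/curvature-line conditions force them to have the same sign on the relevant edges). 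Once this sign analysis is pinned down, the rest is direct substitution of \eqref{semis-eq2} into the identities from Corollary \ref{semis-cor41}.
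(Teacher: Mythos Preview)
Your approach is correct and is exactly what the paper does: the paper's entire proof is the one-line remark that the lemma giving the explicit principal curvatures (Lemma~\ref{semis-lem41}, together with Corollary~\ref{semis-cor41}) ``provides proofs of the following two theorems,'' and your proposal simply spells out that direct substitution. One small slip worth fixing: the identity should read $1+\theta\kappa_\theta = 1/(1-\theta\kappa)$ rather than $1-\theta\kappa_\theta$, but this is inessential to the argument, and your flagged sign-bookkeeping worry in the $\epsilon=-1$ case resolves cleanly---regardless of the individual signs of $\kappa_{-10}$ and $\kappa_{01}$, the product $\kappa_{-10,\theta}\kappa_{01,\theta}$ is negative exactly when $\theta$ lies strictly between $1/\kappa_{-10}=a_{-1}$ and $1/\kappa_{01}=a_1$, with one curvature infinite at each endpoint, yielding the closed interval as stated.
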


For notational simplicity, set 
\[
\alpha_{-1} (s):= 1+s |g_{-1}|^2, \ \alpha (s):= 1+s |g|^2, \ \alpha_1 (s):= 1+s |g_1|^2. 
\]
We then have the next theorem.

\begin{theorem}\label{semis-thm52}
Let $x$ be a semi-discrete BrLW surface 
with Gauss map $n$ a semi-discrete BiLW surface.  
The condition for $\kappa^x_{-10} \cdot 
\kappa^x_{01}$ (equivalently, $\kappa^n_{-10} \cdot 
\kappa^n_{01}$) to be nonpositive -- that is, we have an FPS point with respect to the discrete direction -- is 
\begin{equation*}
\left\{
\begin{split}
\{ |\Delta g_{-1}|^2 (1+s)-\lambda \sigma_{-1} \alpha_{-1}(s) \alpha (s) \} \cdot \{ |\Delta g|^2 (1+s)-\lambda \sigma \alpha(s) \alpha_1 (s) \} >0 , \\
\{ |\Delta g_{-1}|^2 (1-s)+\lambda \sigma_{-1} \alpha_{-1}(s) \alpha (s) \} \cdot \{ |\Delta g|^2 (1-s)+\lambda \sigma \alpha (s) \alpha_1 (s) \} <0 ,
\end{split}
\right.
\end{equation*}
or
\begin{equation*}
\left\{
\begin{split}
\{ |\Delta g_{-1}|^2 (1+s)-\lambda \sigma_{-1} \alpha_{-1}(s) \alpha (s) \} \cdot \{ |\Delta g|^2 (1+s)-\lambda \sigma \alpha (s) \alpha_1 (s) \} <0 , \\
\{ |\Delta g_{-1}|^2 (1-s)+\lambda \sigma_{-1} \alpha_{-1}(s) \alpha (s) \} \cdot \{ |\Delta g|^2 (1-s)+\lambda \sigma \alpha (s) \alpha_1 (s) \} >0 ,
\end{split}
\right.
\end{equation*}
and the condition for $\kappa^x \cdot 
\kappa^x_{1}$ (equivalently, $\kappa^n \cdot 
\kappa^n_{1}$) to be nonpositive -- that is, we have an FPS point with respect to the smooth direction -- is 
\begin{equation*}
\left\{
\begin{split}
\{ |\partial g|(1+s)-\lambda \tau \alpha (s)^2 \} \cdot \{ |\partial g_1|(1+s)-\lambda \tau \alpha_1 (s)^2 \} >0 , \\
\{ |\partial g|(1-s)+\lambda \tau \alpha (s)^2 \} \cdot \{ |\partial g_1|(1-s)+\lambda \tau \alpha_1 (s)^2 \} <0 ,
\end{split}
\right.
\end{equation*}
or
\begin{equation*}
\left\{
\begin{split}
\{ |\partial g|(1+s)-\lambda \tau \alpha (s)^2 \} \cdot \{ |\partial g_1|(1+s)-\lambda \tau \alpha_1 (s)^2 \} <0 , \\
\{ |\partial g|(1-s)+\lambda \tau \alpha (s)^2 \} \cdot \{ |\partial g_1|(1-s)+\lambda \tau \alpha_1 (s)^2 \} >0 ,
\end{split}
\right.
\end{equation*}
\end{theorem}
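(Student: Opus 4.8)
The plan is to reduce everything to the explicit rational formulas for the principal curvatures given in Lemma \ref{semis-lem42}, and then to read off the sign conditions directly.  Recall from Equation \eqref{semis-eq6} that, for a semi-discrete BrLW surface $x$,
\[
\kappa^x = \frac{|\partial g|^2(-1-s) + \alpha(s)^2 \lambda \tau}{|\partial g|^2(1-s) + \alpha(s)^2 \lambda \tau}
\; , \qquad
\kappa^x_{01} = \frac{|\Delta g|^2(-1-s) + \alpha(s)\alpha_1(s)\lambda \sigma}{|\Delta g|^2(1-s) + \alpha(s)\alpha_1(s)\lambda \sigma}
\; ,
\]
and analogously $\kappa^x_1$, $\kappa^x_{-10}$ are obtained by the index shifts $g \mapsto g_1$, $\tau \mapsto \tau$, resp. $g \mapsto g_{-1}$ and using $\sigma_{-1}$, $g_{-1}$ in place of $\sigma$, $g$.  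The first observation to record is that, since $\kappa^x = \kappa^n_{\phantom{1}}/\!$ no wait — more precisely, since $n$ is the BiLW surface parallel to $x$, the relation $K^x = 1/K^n$, $H^x = H^n/K^n$ together with Proposition \ref{semis-prop31} forces $\kappa^n = 1/\kappa^x$ and $\kappa^n_{01} = 1/\kappa^x_{01}$ (one checks this on the level of the formulas in Lemma \ref{semis-lem42}, where the numerator and denominator of $\kappa^n$ are simply those of $\kappa^x$ with the roles of $(-1-s)$ and $(1-s)$ interchanged in a way consistent with $s \mapsto -s$ on the de Sitter side).  Consequently $\kappa^x_{-10}\kappa^x_{01}$ and $\kappa^n_{-10}\kappa^n_{01}$ have the same sign (both are ratios whose sign is the product of the same four linear-in-the-data factors), which justifies the parenthetical ``equivalently'' in the statement; likewise for $\kappa^x\kappa^x_1$ versus $\kappa^n\kappa^n_1$.

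Next I would expand each product.  Writing
\[
P_{*} := |\partial g_*|^2(1+s) - \lambda \tau \,\alpha_*(s)^2 \; , \qquad
Q_{*} := |\partial g_*|^2(1-s) + \lambda \tau\, \alpha_*(s)^2 \; ,
\]
for $* \in \{0,1\}$ (with the convention $g_0 = g$, $\alpha_0 = \alpha$), we have $\kappa^x = -P_0/Q_0$ and $\kappa^x_1 = -P_1/Q_1$, so
\[
\kappa^x \kappa^x_1 = \frac{P_0 P_1}{Q_0 Q_1} \; .
\]
Hence $\kappa^x\kappa^x_1 \le 0$ if and only if $P_0 P_1$ and $Q_0 Q_1$ have opposite (weak) signs, which is exactly the disjunction of the two bracketed systems in the smooth-direction part of the theorem; the $\le 0$ versus $<0$ distinction accounts for the equality cases (a zero of some $P_*$, giving a genuine flat or parabolic point, versus a zero of some $Q_*$, which is where $\kappa^x$ becomes infinite).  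The discrete-direction part is identical after substituting the difference-quotient analogues: set $R_{-10} := |\Delta g_{-1}|^2(1+s) - \lambda\sigma_{-1}\alpha_{-1}(s)\alpha(s)$, $R_{01} := |\Delta g|^2(1+s) - \lambda\sigma\,\alpha(s)\alpha_1(s)$, and $S_{-10}$, $S_{01}$ the corresponding $(1-s)$-combinations; then $\kappa^x_{-10}\kappa^x_{01} = R_{-10}R_{01}/(S_{-10}S_{01})$, and nonpositivity of this ratio is precisely the stated two-case disjunction on the signs of $R_{-10}R_{01}$ and $S_{-10}S_{01}$.

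The main point requiring care — and the only real obstacle — is the bookkeeping around denominators and signs: one must know that $\alpha(s)$, $\alpha_1(s)$, $\alpha_{-1}(s)$ are all nonzero (this is the genericity assumption $\mathcal{T} \ne 0$ together with its index-shifted forms), that $\lambda \ne 0$ and $1 - \lambda\sigma \ne 0$ as assumed, and that $\partial g$, $\Delta g$ never vanish, so that the formulas of Lemma \ref{semis-lem42} are legitimate and the sign of a quotient really is the product of the signs of numerator and denominator.  Beyond that, the argument is purely formal: substitute the Lemma \ref{semis-lem42} expressions, clear the (nonzero) common positive factors such as $|\partial g|^{2}$, $|\Delta g|^{2}$, and recognize the resulting product-of-two-linear-factors inequalities as the displayed systems.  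I would therefore present the proof as ``By Lemma \ref{semis-lem42}, $\kappa^x\kappa^x_1 = P_0P_1/(Q_0Q_1)$ and $\kappa^x_{-10}\kappa^x_{01} = R_{-10}R_{01}/(S_{-10}S_{01})$; the asserted conditions are exactly the negativity conditions for these ratios, and the equivalence with the $n$-versions follows since $\kappa^n = 1/\kappa^x$ and $\kappa^n_{01} = 1/\kappa^x_{01}$.'' and leave the one-line verifications to the reader, as the surrounding text already does for comparable computations.
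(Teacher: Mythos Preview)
Your proposal is correct and is exactly the approach the paper takes: the paper simply states ``Lemma \ref{semis-lem42} now provides proofs of the following two theorems'' and offers no further argument, so your write-up is in fact a more explicit version of the intended one-line proof. Two minor remarks: the relation $\kappa^n = 1/\kappa^x$ follows already from Definition \ref{semis-def31} (since $\partial n = -\kappa^x \partial x$ and, viewing $x$ as the normal of $n$, $\partial x = -\kappa^n \partial n$), so you need not route through Proposition \ref{semis-prop31}; and you should clean up the drafting artifact (``no wait --- more precisely'') before submitting.
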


In \cite{Yashi-next}, the second author established a notion of singular edges for semi-discrete surfaces in Lorentzian spaceforms:

\begin{definition} \label{semis-def55}
An edge $[x,x_1]$ of a semi-discrete 
surface is said to be {\em singular} if 
the tangent plane at this edge 
is not spacelike.  
\end{definition}

With this definition in hand, the second 
author proved this in \cite{Yashi-next}: 
Let $g$ be a semi-discrete holomorphic function and let $x$ be a semi-discrete 
maximal surface determined from $g$ 
by Equation \eqref{semis-eq1}.  
Then an edge $[x,x_1]$ is singular 
if and only if the tangent circle $C$ at $g$, $g_1$ intersects the unit circle $\mathbb{S}^1 = \{ z \in \mathbb{C} \, | \, |z|=1 \}$.

We now prove the following relationship 
between singular points and singular edges: 

\begin{theorem}\label{semis-thm53}
At any singular point $x(k,t)$ 
of a semi-discrete maximal surface with respect to the 
discrete direction such that 
$\kappa_{k-1,k}$ and $\kappa_{k,k+1}$ 
are both 
finite, at least one of the two adjacent 
edges is singular.  

At any singular point with respect to the 
smooth direction, the only possibility is 
that $\kappa$ is infinite there and the 
corresponding image of $g$ lies in 
$\mathbb{S}^1$ there.   
\end{theorem}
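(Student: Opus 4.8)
The plan is to reduce both statements to the explicit formulas for the principal curvatures $\kappa$ and $\kappa_{01}$ of a semi-discrete maximal surface given in Lemma \ref{semis-lem41} (with $\epsilon=-1$), together with the characterization of singular edges recalled just before the theorem (an edge $[x,x_1]$ is singular precisely when the tangent circle $C$ at $g,g_1$ meets $\mathbb{S}^1$). From Equation \eqref{semis-eq2}, in the maximal case we have
\[
\kappa_{k,k+1}(t) = \frac{-4|\Delta g|^2}{\sigma(1-|g|^2)(1-|g_1|^2)} ,
\]
so when $\kappa_{k-1,k}$ and $\kappa_{k,k+1}$ are both finite (which forces $|g_{-1}|,|g|,|g_1|$ all different from $1$), the sign of $\kappa_{k-1,k}(t)\cdot\kappa_{k,k+1}(t)$ is, up to the fixed positive factor $16|\Delta g_{-1}|^2|\Delta g|^2/(\sigma_{-1}\sigma)^2$, equal to the sign of
\[
(1-|g_{-1}|^2)(1-|g|^2)^2(1-|g_1|^2)\cdot \sigma_{-1}\sigma .
\]

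For the first assertion I would argue as follows. Singularity in the discrete direction means $\kappa_{k-1,k}(t)\cdot\kappa_{k,k+1}(t)\le 0$; since $(1-|g|^2)^2\ge 0$, and the curvatures are assumed finite so $1-|g|^2\ne 0$, this forces $(1-|g_{-1}|^2)(1-|g_1|^2)\cdot\sigma_{-1}\sigma \le 0$. Hence at least one of the two products $(1-|g_{-1}|^2)(1-|g|^2)$ and $(1-|g|^2)(1-|g_1|^2)$ is nonpositive — otherwise both would be positive and their product, equal to $(1-|g|^2)^2(1-|g_{-1}|^2)(1-|g_1|^2)$, would be positive, contradicting the sign we just derived (the sign of $\sigma_{-1}\sigma$ plays the analogous role: I will keep track of it carefully, using that $\sigma$ comes from a cross-ratio factorization with a definite sign along each discrete strip). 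A nonpositive value of $(1-|g_*|^2)(1-|g|^2)$ means that $g$ and $g_*$ lie on opposite sides of (or on) $\mathbb{S}^1$, which by continuity/elementary plane geometry forces the tangent circle through $g$ and $g_*$ to intersect $\mathbb{S}^1$; by the recalled criterion the corresponding edge is singular. This gives at least one singular adjacent edge.

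For the second assertion: singularity in the smooth direction means $\kappa_{k_0-1}(t_0)\cdot\kappa_{k_0}(t_0)<0$ or $\kappa_{k_0}(t_0)\cdot\kappa_{k_0+1}(t_0)<0$ or one of these three $\kappa$'s is infinite. But by Lemma \ref{semis-lem41}, $\kappa = -4|\partial g|^2/(\tau(1-|g|^2)^2)$, which has a definite sign (that of $-1/\tau$, constant along a smooth curve) and is never zero since $\partial g\ne 0$. Therefore the products $\kappa_{k_0-1}\kappa_{k_0}$ and $\kappa_{k_0}\kappa_{k_0+1}$ cannot be negative, and the only way to have a singular point in the smooth direction is that some $\kappa$ is infinite, which by the same formula happens exactly when the corresponding $1-|g|^2=0$, i.e. that image of $g$ lies on $\mathbb{S}^1$. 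The main obstacle I anticipate is the careful sign bookkeeping for $\sigma$, $\sigma_{-1}$ and $\tau$ (their signs are constrained by the negativity of the cross ratios in the isothermic/curvature-line conditions but I must make sure the constraint is used consistently across two neighboring edges), and tightening the plane-geometry step "$g,g_*$ on opposite sides of $\mathbb{S}^1$ $\Rightarrow$ tangent circle meets $\mathbb{S}^1$" into a clean statement; this last point is essentially immediate since a circle through a point inside and a point outside $\mathbb{S}^1$ must cross it, and the boundary case where one of $g,g_*$ is on $\mathbb{S}^1$ is trivially an intersection.
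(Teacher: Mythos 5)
Your proposal is correct and follows essentially the same route as the paper: both use the explicit formulas of Lemma \ref{semis-lem41} with $\epsilon=-1$ to deduce from $\kappa_{-10}\kappa_{01}<0$ that $(1-|g_{-1}|^2)(1-|g_1|^2)<0$ (the paper likewise uses, implicitly, that $\sigma_{-1}$ and $\sigma$ share a sign because $\tau/\sigma<0$ at the fixed $t$), then invoke the tangent-circle criterion from \cite{Yashi-next}, and for the smooth direction both argue that $\kappa$ has a fixed sign so only an infinite $\kappa$, i.e.\ $|g|=1$, can occur. Your added plane-geometry remark and the sign bookkeeping for $\sigma_{-1}\sigma$ merely spell out steps the paper leaves implicit.
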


\begin{proof}
Proof of the first paragraph: 
\[
0 > \kappa_{-10} \cdot \kappa_{01} = 
\frac{16 |\Delta g_{-10}|^2 
|\Delta g_{01}|^2}{\sigma_{-10} \sigma_{01} 
(1-|g_{-1}|^2) (1-|g|^2)^2 (1-|g_{1}|^2)} \; ,
\]
and thus $0 > (1-|g_{-1}|^2) (1-|g_1|^2)$, which implies exactly one of $g_{-1}$ 
and $g_1$ lies inside $\mathbb{S}^1$. 

Proof of the second paragraph: 
By Equation \eqref{semis-eq2}, $\kappa$ cannot change 
sign, and the result follows.  
\end{proof}

We have the analogous definition for singular 
edges of semi-discrete CMC $1$ 
surfaces in $\mathbb{S}^{2,1}$, as was given 
for semi-discrete maximal surfaces in  $\mathbb{R}^{2,1}$ in 
\cite{Yashi-next}: 

\begin{definition}\label{semis-def56}
Let $n$ be a semi-discrete CMC $1$ surface in $\mathbb{S}^{2,1}$. Then $[n,n_1]$ for some $(k,t) \in \mathbb{Z} \times \mathbb{R}$ is a {\em singular edge} if the plane $\mathcal{P}(n,n_1)$ spanned by $\{ \partial n, \Delta n, \partial \Delta n \}$ is not spacelike. 
\end{definition}

Now, similar to Theorem 1.2 in \cite{Yashi-next}, 
we have the following proposition, which is preparatory for proving 
Theorem \ref{semis-thm55} below.

\begin{theorem}\label{semis-thm54}
Let $n$ be a semi-discrete CMC $1$ surface in $\mathbb{S}^{2,1}$. Then $[n,n_1]$ for some $(k,t) \in \mathbb{Z} \times \mathbb{R}$ is a {\em singular edge} for all $\lambda$ sufficiently close to zero if and only if 
the circle tangent to $\partial g$ and $\partial g_1$ at  
$g$ and $g_1$, respectively, intersects $\mathbb{S}^1$ transversally.  
\end{theorem}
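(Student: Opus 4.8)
The plan. Since $n$ has constant mean curvature $1$, Proposition~\ref{semis-prop43} forces the constant $s$ to equal $-1$, so $\mathcal{T}=1-|g|^2$ and the genericity hypothesis $\mathcal{T}\neq 0$ reads $|g|\neq 1$. For every sufficiently small nonzero $\lambda$, Lemma~\ref{semis-lem42} with $s=-1$ shows the principal curvatures $\kappa$, $\kappa_1$, $\kappa_{01}$ of the normal surface $x\in\mathbb{H}^3$ are nonzero; hence, from $\partial n=-\kappa\,\partial x$, $\partial n_1=-\kappa_1\,\partial x_1$ and $\Delta n=-\kappa_{01}\,\Delta x$, the plane $\mathcal{P}(n,n_1)$ of Definition~\ref{semis-def56} coincides with the tangent plane $T=\mathrm{span}\{\partial x,\Delta x\}$ of $x$ at the edge $[x,x_1]$ (which also contains $\partial x_1$). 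Because $x$ has timelike position vector, $\partial x$ is spacelike wherever the profile curve is regular, so --- generically assuming $\partial x$ and $\Delta x$ are linearly independent --- $T$ is spacelike exactly when
\[
D(\lambda):=(\partial x\circ\partial x)(\Delta x\circ\Delta x)-(\partial x\circ\Delta x)^2>0 ,
\]
and $[n,n_1]$ is a singular edge precisely when $D(\lambda)\le 0$. The sign of $D$ does not depend on the chosen spanning pair; if $\partial x\parallel\Delta x$ one uses $\partial x_1$ and $\Delta x$ instead.

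Next I would compute $D(\lambda)$ from the explicit Weierstrass expressions for $\partial x$, $\partial x_1$, $\Delta x$ appearing in the proof of Proposition~\ref{semis-prop42}, specialized to $s=-1$ and normalized so that $E$ is the identity at the point in question --- harmless, since $\det E$ is real and constant in $t$, being only multiplied by the real factor $1-\lambda\sigma$ along the discrete direction. Using $v\circ v=-\det V$ in the matrix model and the corresponding polarization formula for $v\circ w$, one sees that $D(\lambda)$ is rational, hence real-analytic, in $\lambda$ near $0$, with the only denominators $1-\lambda\sigma$, $1-|g|^2$ and $1-|g_1|^2$, all nonvanishing there. Setting $\lambda=0$ collapses everything: $\partial x\circ\partial x=4|\partial g|^2/(1-|g|^2)^2$ and $\Delta x\circ\Delta x=4|\Delta g|^2/\bigl((1-|g|^2)(1-|g_1|^2)\bigr)$, with a similarly short expression for $\partial x\circ\Delta x$, so that $D(0)$ is an explicit rational function of $g$, $g_1$, $\partial g$ whose sign is unchanged under rescaling $\partial g$ --- so it depends only on the data $(g,g_1,\arg\partial g)$ determining the tangent circle $C$.

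The core step is to match the sign of $D(0)$ with the position of $C$ relative to $\mathbb{S}^1$. Writing $C$ by its center $c$ and radius $r$ (or, to subsume the case where $C$ is a line, directly via $g$, $g_1$ and the tangency direction), the classical two-circle inversive-distance identity says that $C$ meets $\mathbb{S}^1$ transversally, tangentially, or not at all according as $\bigl(|c|^2-r^2-1\bigr)^2-4r^2$ is negative, zero, or positive, and one then checks that $D(0)$ equals this quantity up to a strictly positive factor. Equivalently, $D(0)$ is, up to a positive factor, the analogous Gram determinant for the semi-discrete maximal surface in $\mathbb{R}^{2,1}$ determined by $g$ (the $\lambda\to 0$ limit of the $\mathbb{H}^3$-construction reproduces that surface's tangent planes up to causal character), so the identification also follows from Theorem~1.2 of \cite{Yashi-next} together with the observation that $D(0)=0$ occurs exactly at tangency. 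I expect this matching --- converting the Gram determinant into the circle-intersection discriminant uniformly, including the line case --- to be the main obstacle.

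Finally, $[n,n_1]$ is a singular edge for all sufficiently small nonzero $\lambda$ if and only if $D(\lambda)\le 0$ throughout a punctured neighborhood of $\lambda=0$; since $D$ is analytic, this is equivalent to $D(0)<0$. Indeed, if $D(0)>0$ the edge is spacelike for small $\lambda$, and if $D(0)=0$ then, as one verifies from the $\lambda$-expansion of $D$, the function $D$ is not $\le 0$ on any neighborhood of $\lambda=0$, so the edge is again non-singular for some small $\lambda$. By the previous step, $D(0)<0$ holds exactly when $C$ meets $\mathbb{S}^1$ transversally, which proves the theorem.
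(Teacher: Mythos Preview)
Your strategy is correct and is essentially the same as the paper's: both arguments reduce the question to the causal character of the common tangent plane of $x$, $n$ and $x+n$, compute the associated Gram determinant as a function of $\lambda$, and identify its sign at $\lambda=0$ with the circle--$\mathbb{S}^1$ intersection condition.

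The one substantive difference is the choice of frame. The paper does not work with $x$ and the raw formulas from the proof of Proposition~\ref{semis-prop42}; instead it passes to the gauge $F:=E\left(\begin{smallmatrix}1&-g\\0&1\end{smallmatrix}\right)$ and to the null combination $x+n$, which is the CMC~$1$ analogue of the ``hyperbolic Gauss map'' frame. In that frame one obtains very clean generators: $X_1$ is independent of $\lambda$ and $X_2$ is affine in $\lambda$, so the Gram determinant is an explicit quadratic in $\lambda$ whose constant term is immediately the inequality
\[
4|\Delta g|^2\,|\partial g|^2\,(1-|g|^2)(1-|g_1|^2)\;<\;\bigl\{(1-\bar g g_1)\overline{\Delta g}\,\partial g+(1-g\bar g_1)\Delta g\,\partial\bar g\bigr\}^2,
\]
which one then recognizes as the transversal-intersection discriminant. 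Your route through $x$ and the coefficients $c_1,c_2,c_3$ works, but those coefficients all carry $\lambda$, so you will have to strip several $\lambda$-dependent positive factors before reaching the same inequality; the $F$-gauge buys you that simplification for free. Your alternative shortcut --- arguing that the $\lambda\to 0$ tangent plane has the same causal type as the tangent plane of the semi-discrete maximal surface with the same $g$, and then invoking Theorem~1.2 of \cite{Yashi-next} --- is attractive, but you should actually verify that identification rather than assert it; the paper effectively recomputes this from scratch in the $F$-frame. Finally, both your outline and the paper are brief about the borderline case $D(0)=0$ (tangency of $C$ and $\mathbb{S}^1$); your remark that the $\lambda$-expansion rules out $D\le 0$ on a full neighborhood is exactly what is needed, but note that in the paper's frame the Gram determinant is only quadratic in $\lambda$, which makes that verification a one-line check rather than an appeal to analyticity.
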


\begin{proof}
By Lemma \ref{semis-lem42}, $\partial (x+n) \parallel \partial n$ and $\Delta (x+n)  \parallel \Delta n$, implying that each tangent plane of $x+n$ at the edge $[x+n,x_1+n_1]$ is parallel to the one for $x$. Thus checking the causality of a tangent plane of $x$ at $[x,x_1]$ is equivalent to checking causality of a tangent plane of $x+n$ at the edge $[x+n,x_1+n_1]$. 

Defining 
$F:=E \begin{pmatrix} 1 & -g \\ 0 & 1 \end{pmatrix}$, we have the following forms:
\begin{eqnarray*}
&&x+n=\frac{F}{\det F} \left( \frac{2}{1-|g|^2} \begin{pmatrix} |g|^2 & g \\ \overline{g} & 1 \end{pmatrix} \right) \overline{F}^t, \\ 
&&x_1+n_1=\frac{F}{\det F} \left( \frac{2}{(1-\lambda \sigma)(1-|g_1|^2)} \begin{pmatrix} |g_1|^2 & g_1 \\ \overline{g_1} & 1 \end{pmatrix} \right) \overline{F}^t \ ,
\end{eqnarray*}
where $F$ and $g$ satisfy
\[
\partial F =F \begin{pmatrix} g&-g^2 \\ 1 &-g\end{pmatrix} \frac{\lambda \tau}{\partial g}, \quad \Delta F =F \begin{pmatrix} g&-g g_1 \\ 1 &-g_1\end{pmatrix} \frac{\lambda \sigma}{\Delta g}, \quad \frac{|\partial g| |\partial g_1|}{|\Delta g|^2}=-\frac{\tau}{\sigma} \ .
\]
Then 
\[
\partial (x+n)=\frac{2}{(1-|g|^2)^2 \det F} F X_1 \bar{F}^t \ , \quad \Delta (x+n)=\frac{2}{(1-\lambda \sigma )(1-|g|^2)(1-|g_1|^2) \det F} F X_2 \bar{F}^t \ , 
\]
where
\begin{eqnarray*}
&&X_1:=\begin{pmatrix} \partial g \cdot \bar{g}+g \cdot \partial \bar{g} & \partial g +g^2 \cdot \partial \bar{g} \\ \partial \bar{g} +\bar{g}^2 \partial g & \partial g \cdot \bar{g}+g \cdot \partial \bar{g} \end{pmatrix}\ , \\
&&X_2:=\begin{pmatrix} |g_1|^2-|g|^2 & \Delta g +g g_1 \Delta \bar{g} \\ \Delta \bar{g} +\bar{g} \overline{g_1} \Delta g & |g_1|^2-|g|^2 \end{pmatrix} +\lambda \sigma (1-|g_1|^2)\begin{pmatrix} |g|^2&g \\ \bar{g} & 1 \end{pmatrix} \ .
\end{eqnarray*}

Because $\partial (x+n)$, $\Delta (x+n)$ and $\partial (x_1+n_1)$ are coplanar, our task is to find a condition, call it condition (C), for the span of $X_1$ and $X_2$ to be non-spacelike for all $\lambda$ close to zero, i.e. $\langle X_1,X_1 \rangle \langle X_2,X_2 \rangle -\langle X_1,X_2 \rangle^2 <0$ (for all $\lambda$ close to 
$0$), and show that this condition (C) is equivalent to the circle tangent to $\partial g$ and $\partial g_1$ at $g$ and $g_1$, respectively, intersecting $\mathbb{S}^1$ transversally. When $|g|=1$, respectively $|g_1|=1$, we know that $\partial (x+n)$, respectively $\partial (x_1+n_1)$, itself is lightlike for all $\lambda \in \mathbb{R} \setminus \{ 0 \}$, so the tangent plane will certainly not be spacelike. Thus, it remains only to find the condition (C) when $|g|$ and $|g_1|$ are both not $1$, and in this case it is
\[
4|\Delta g|^2 |\partial g|^2 (1-|g|^2)(1-|g_1|^2)<\{ (1-\bar{g}g_1)\overline{\Delta g} \cdot \partial g +(1-g\overline{g_1}) \Delta g \cdot \partial \overline{g} \}^2 \ .
\]
A direct computation verifies that this condition (C) is precisely the condition that the circle tangent to $\partial g$ and $\partial g_1$ at  
$g$ and $g_1$, respectively, intersects $\mathbb{S}^1$ transversally.
\end{proof}

We now prove the theorem we have been aiming towards: 

\begin{theorem}\label{semis-thm55}
Consider a point $(k,t)$ in the domain of a semi-discrete 
CMC $1$ surface in $\mathbb{S}^{2,1}$.  Suppose this point is a 
singular point with respect to the 
discrete direction such that 
$\kappa_{k-1,k}$ and $\kappa_{k,k+1}$ 
are both finite, for all $\lambda$ sufficiently close to $0$.   
Then at least one of the two 
adjacent edges is also singular for all $\lambda$ sufficiently close to $0$. 
\end{theorem}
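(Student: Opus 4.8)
The plan is to mirror the structure of the proof of Theorem~\ref{semis-thm53}, but using the curvature formulas from Lemma~\ref{semis-lem42} (with $\epsilon$-terms replaced by the $s$-terms) together with the singular-edge criterion established in Theorem~\ref{semis-thm54}. First I would write out, using Lemma~\ref{semis-lem42} applied at the three consecutive sites $k-1,k,k+1$, the product $\kappa^n_{-10}\cdot\kappa^n_{01}$ (equivalently, by Theorem~\ref{semis-thm52}, the product $\kappa^x_{-10}\cdot\kappa^x_{01}$) in the limit $\lambda\to 0$. Since each $\kappa_{01}$ has the form $\bigl(|\Delta g|^2(-1-s)+(\text{stuff})\lambda\sigma\bigr)/\bigl(|\Delta g|^2(1-s)+(\text{stuff})\lambda\sigma\bigr)$, for $\lambda$ near $0$ the sign of $\kappa^x_{01}$ is the sign of $(-1-s)/(1-s)$ up to the (fixed, $k$-independent) factor, so the product over two adjacent edges cannot change sign for $\lambda$ small \emph{unless} one of the denominators $|\Delta g_{*}|^2(1-s)+\lambda\sigma_{*}\,\alpha_{*}\alpha$ is itself passing through zero — i.e. unless $s$ is close to the degenerate value $1$. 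The only mechanism by which the hypothesis (singular with respect to the discrete direction, $\kappa$'s finite, for all $\lambda$ near $0$) can hold is therefore that an $|g_{*}|=1$ situation occurs, exactly as in Theorem~\ref{semis-thm53}; more precisely, the product being negative forces a sign change across one of the two edges, and tracking that sign change back through the denominators/numerators forces $(1-|g_{-1}|^2)(1-|g_1|^2)<0$ in the $s\to$ (flat-surface) limit, hence exactly one of $g_{-1},g_1$ lies strictly inside $\mathbb{S}^1$ and the other strictly outside.

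Second, having isolated which of the two adjacent edges — say $[n_{*},n]$ with $*\in\{-1,1\}$ — carries $g_{*}$ and $g$ on opposite sides of $\mathbb{S}^1$, I would invoke Theorem~\ref{semis-thm54}: a circle through two points, one inside and one outside the unit circle, necessarily meets $\mathbb{S}^1$, and meets it transversally (two transverse intersection points) since it cannot be internally tangent while separating inside from outside. Hence by Theorem~\ref{semis-thm54} the corresponding edge $[n_{*},n]$ is singular for all $\lambda$ sufficiently close to $0$, which is the desired conclusion. The logical skeleton is thus: curvature-product-negative $\Rightarrow$ opposite-sides-of-$\mathbb{S}^1$ on one edge $\Rightarrow$ tangent circle meets $\mathbb{S}^1$ transversally $\Rightarrow$ that edge is singular.

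The main obstacle I anticipate is making the ``$\lambda$ near $0$'' bookkeeping clean: unlike the maximal-surface case, here the principal curvature expressions in Lemma~\ref{semis-lem42} genuinely depend on $\lambda$, and I must be careful that the hypothesis is ``singular for \emph{all} $\lambda$ sufficiently close to $0$'' — so I should take the limit $\lambda\to 0^{\pm}$ of the relevant products and argue that a sign change persisting on a punctured neighborhood of $\lambda=0$ forces the limiting quantities $(1-|g_{*}|^2)$ to have the right signs, rather than accidentally arising from the $\lambda$-dependent perturbation terms. A secondary (but routine) point is translating ``intersects $\mathbb{S}^1$'' into ``intersects transversally'' in Theorem~\ref{semis-thm54}: this is immediate because a circle with one marked point strictly inside and one strictly outside $\mathbb{S}^1$ crosses $\mathbb{S}^1$ (it cannot be tangent), and I would state this one-line geometric observation explicitly rather than leaving it implicit. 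Everything else reduces to the same direct computation already used for Theorem~\ref{semis-thm53}, now carried out with the $s$-parametrized curvature formulas of Lemma~\ref{semis-lem42}.
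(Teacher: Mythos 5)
Your second step --- passing from ``$g_{-1}$ and $g_1$ lie strictly on opposite sides of $\mathbb{S}^1$ and $|g|\neq 1$'' to the conclusion via Theorem \ref{semis-thm54} --- is exactly the paper's argument, and your explicit remark that a circle through a point strictly inside and a point strictly outside $\mathbb{S}^1$ must cross it transversally is a useful elaboration of what the paper only calls clear. The gap is in your first step. A semi-discrete CMC $1$ surface in $\mathbb{S}^{2,1}$ is the surface $n$ of the Weierstrass-type representation with $s=-1$ exactly (from $2s(H^n-1)-(1+s)(K^n-1)=0$ with $H^n\equiv 1$); $s$ is a fixed constant of the surface, not a parameter tending anywhere. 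At $s=-1$ the term $|\Delta g|^2(-1-s)$ in the numerator of $\kappa_{01}$ in \eqref{semis-eq6} vanishes identically, so your claim that for $\lambda$ near $0$ the sign of $\kappa^x_{01}$ is that of $(-1-s)/(1-s)$ fails precisely in the one case the theorem concerns: the sign is instead governed by the $O(\lambda)$ term, since $\kappa^x_{01}=\lambda\sigma(1-|g|^2)(1-|g_1|^2)\big/\bigl(2|\Delta g|^2+\lambda\sigma(1-|g|^2)(1-|g_1|^2)\bigr)$, whose sign for small $\lambda$ is that of $\lambda\sigma(1-|g|^2)(1-|g_1|^2)$. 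It is from this (and the analogous expression on the edge $[x_{-1},x]$, with the reciprocal relation between the principal curvatures of $n$ and of $x$ preserving signs) that negativity of $\kappa_{-10}\cdot\kappa_{01}$ for all small $\lambda$ forces $(1-|g_{-1}|^2)(1-|g|^2)^2(1-|g_1|^2)<0$, hence $|g|,|g_{\pm 1}|\neq 1$ and $g_{-1},g_1$ on opposite sides of $\mathbb{S}^1$ --- which is the paper's actual route.

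As written, your ``mechanism'' paragraph does not deliver that intermediate conclusion: with $s\neq -1$ held fixed, your own sign analysis would make the product positive for all small $\lambda$, so the hypothesis could never hold and the theorem would be vacuous; and the phrases ``unless $s$ is close to the degenerate value $1$'' and ``in the $s\to$ (flat-surface) limit'' have no meaning here, since $s$ does not vary. Thus $(1-|g_{-1}|^2)(1-|g_1|^2)<0$ is asserted rather than derived. The fix is straightforward: specialize \eqref{semis-eq6} to $s=-1$ from the outset and redo the small-$\lambda$ sign computation there; your second paragraph then completes the proof exactly as in the paper.
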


\begin{proof}
From the Weierstrass-type representation, we can write the surface 
as $n$ in $\mathbb{S}^{2,1}$ 
given by some semi-discrete holomorphic function $g$ with 
$s=-1$, with corresponding HMC $1$ surface $x$ in $\mathbb{H}^3$ 
using the same $g$ and same value of $s$. The assumptions regarding 
$(k,t)$ being a singular point for all $\lambda$ close to $0$ imply 
that none of $|g|$, $|g_1|$ and $|g_{-1}|$ are $1$, and also that 
$g_1$ and $g_{-1}$ lie on opposite sides of $\mathbb{S}^1$.  

By Theorem \ref{semis-thm54}, the edge 
$[n(k,t),n(k+1,t)]$ is singular for all $\lambda$ close to zero 
if and only if the circle tangent to $\partial g$ and $\partial g_1$ at  
$g$ and $g_1$, respectively, intersects $\mathbb{S}^1$ transversally.  
From the above properties 
of $g$, $g_1$ and $g_{-1}$, it is clear that at least one of the two edges 
$[n(k,t),n(k+1,t)]$ and $[n(k-1,t),n(k,t)]$ is then singular for all 
$\lambda$ close to zero.  
\end{proof}

\begin{figure}
\begin{center}
\includegraphics[width=150mm]{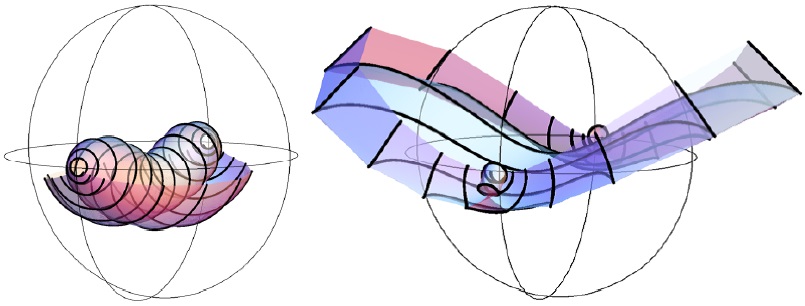} 
\end{center}
\caption{Left-hand side: a semi-discrete CMC $1$ Enneper cousin in $\mathbb{H}^3_+$, right-hand side: a semi-discrete HMC $1$ surface in $\mathbb{H}^3_+ \cup \mathbb{H}^3_-$. The hyperbolic $3$-spaces $\mathbb{H}^3_+$ and $\mathbb{H}^3_-$ are visualized here by stereographically projecting within $4$-dimensional Minkowski space to a horizontal $3$-dimensional spacelike vector subspace.}
\end{figure}

\begin{figure}
\begin{center}
\includegraphics[width=150mm]{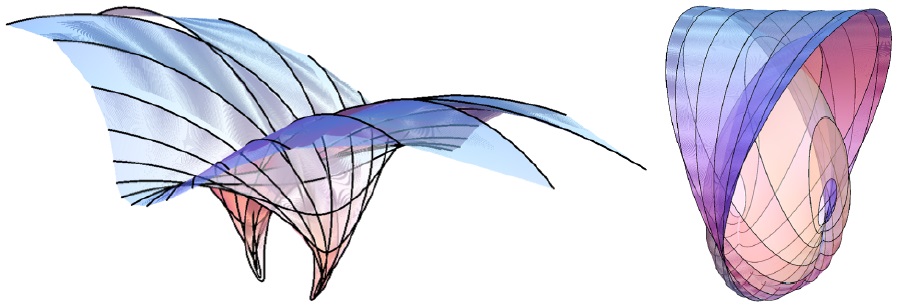}
\end{center}
\caption{Left-hand side: a semi-discrete CMC $1$ Enneper cousin in $\mathbb{S}^{2,1}$, right-hand side: a semi-discrete HMC $1$ surface in $\mathbb{S}^{2,1}$. The de Sitter $3$-space $\mathbb{S}^{2,1}$ is visualized here using the hollow ball model (see \cite{F}).}
\end{figure}


\end{document}